\newtheorem{thm}{Theorem}
\newtheorem{lem}[thm]{Lemma}
\newtheorem{prop}[thm]{Proposition}
\newtheorem{cor}[thm]{Corollary}
\newtheorem{conj}[thm]{Conjecture}
\newtheorem{qn}[thm]{Question}
\newtheorem*{claim}{Claim}
\newtheoremstyle{primedtheorem} %
    {} 
    {} 
    {\itshape} 
    {} 
    {\bfseries} 
    {.} 
    {.5em} 
    {#1 \thmnote{#3}'} 
\theoremstyle{primedtheorem}
\newtheorem*{thmprimed}{Theorem}
\theoremstyle{definition}
\newcommand{\diagrams}[1]{#1}
\newcommand{\N}{\mathbb{N}}
\newcommand{\Z}{\mathbb{Z}}
\newcommand{\es}{\emptyset}
\newcommand{\sm}{\setminus}
\newcommand{\Zk}[1]{\Z_{#1}}
\newcommand{\Cu}{C^\text{up}}
\newcommand{\Cd}{C_\text{down}}
\newcommand{\Tu}{T^\text{up}}
\newcommand{\up}[2][]{{#2}\ifthenelse{\isempty{#1}}{'}{'^{(#1)}}}
\newcommand{\uup}[2][]{{#2}^{\ifthenelse{\isempty{#1}}%
        {\normalfont\dagger}{\normalfont\dagger(#1)}}}
\newcommand{\inZk}[1]{\overline{#1}}
\newcommand{\idbyA}[1]{#1(\alpha)}
\newcommand{\multi}[2][]{#1\cdot#2}
\newenvironment{claimproof}{%
    \par\noindent\textit{Proof of claim.}
}{%
    \hfill$\square$
}
\newcommand{\enumskip}{15pt}
\begin{document}

\nocite{*}

\title{Tiling with arbitrary tiles}

\author{%
    Vytautas Gruslys\thanks{%
        Department of Pure Mathematics and Mathematical Statistics,
        Centre for Mathematical Sciences,
        University of Cambridge,
        Wilberforce Road,
        CB3\;0WB Cambridge,
        United Kingdom;
        e-mail: \mbox{\texttt{\{v.gruslys,i.leader\}@dpmms.cam.ac.uk}}\,.
    }
    \and
    Imre Leader\footnotemark[1]
    \and
    Ta Sheng Tan\thanks{%
        Institute of Mathematical Sciences,
        Faculty of Science,
        University of Malaya,
        50603 Kuala Lumpur,
        Malaysia;
        e-mail: \texttt{tstan@um.edu.my}\,.
        This author acknowledges support received from the Ministry
        of Education of Malaysia via FRGS grant FP048-2014B.
    }
}

\maketitle

\begin{abstract}
    Let $T$ be a tile in $\mathbb{Z}^n$, meaning a finite
    subset of $\mathbb{Z}^n$. It may or may not tile $\mathbb{Z}^n$, in the
    sense of $\mathbb{Z}^n$ having a partition into copies of $T$. However,
    we prove that $T$ does tile $\mathbb{Z}^d$ for some $d$.
    This resolves a conjecture of Chalcraft.
\end{abstract}


\section{Introduction}

    Let $T$ be a \emph{tile}, by which we mean a finite non-empty subset
    of $\Z^n$ for some $n$. It is natural to ask if $\Z^n$ can
    be partitioned into copies of $T$, that is, into subsets each of
    which is isometric to $T$. If such a partition exists, we
    say that $T$ \emph{tiles} $\Z^n$.

    For instance, consider the following tiling of $\Z^2$ by copies of
    the $C$-shaped pentomino.

    \begin{figure}[H]
        \centering
        \includegraphics[scale=0.4]{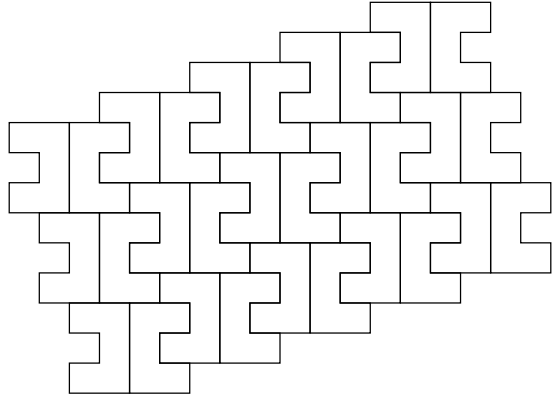}
        \caption{
            The $C$-shaped pentomino tiles $\Z^2$.
        } 
        \label{fig:tetromino-s}
    \end{figure}

    As another example, the one-dimensional tile $\mathtt{X.X}$ (to 
    be understood as $\{1, 3\}$) tiles $\Z$, and so does $\mathtt{XX.X}$\,.
    On the other hand, $\mathtt{XX.XX}$ is a one-dimensional tile that
    does not tile $\Z$. Does it tile some space of higher dimension? The
    following diagram shows that $\mathtt{XX.XX}$ does tile $\Z^2$.

    \begin{figure}[H]
        \centering
        \includegraphics{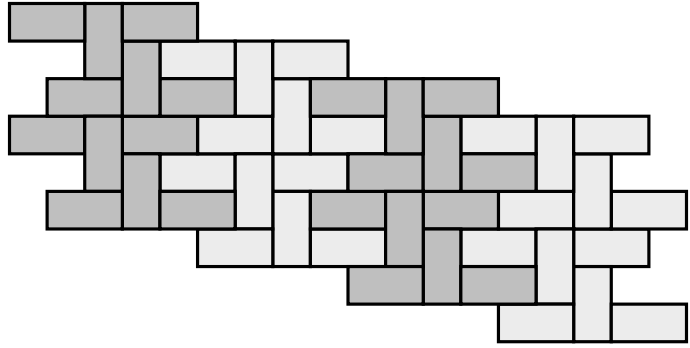}
        \caption{
            This pattern is formed from disjoint copies of $\mathtt{XX.XX}$;
            copies of the pattern may be stacked vertically to tile $\Z^2$.
        } 
        \label{fig:xx.xx}
    \end{figure}

    A similar pattern works for $\mathtt{XXX.XX}$ in $\Z^2$. However,
    one can check by hand that $\mathtt{XXX.XXX}$ does not tile $\Z^2$.
    Does it tile $\Z^3$, or $\Z^d$ for some $d$? What about more
    complicated one-dimensional tiles?

    Let us now consider a couple of two-dimensional examples. Let $T$ denote
    the $3 \times 3$ square with the central point removed. Clearly $T$
    does not tile $\Z^2$, since the hole in a copy of $T$ cannot be
    filled. However, in $\Z^3$ there is enough space for one copy of $T$
    to fill the hole of another. (Of course, this in no way implies that
    $T$ does tile $\Z^3$.)

    For a `worse' example, consider the $5 \times 5$ square with
    the central point removed. Two copies of such tile cannot be
    interlinked in $\Z^3$. However, there is, of course, enough space
    in $\Z^4$ to fill the hole, as demonstrated in the following diagram.

    \begin{figure}[H]
        \centering
        \includegraphics[scale=1.6]{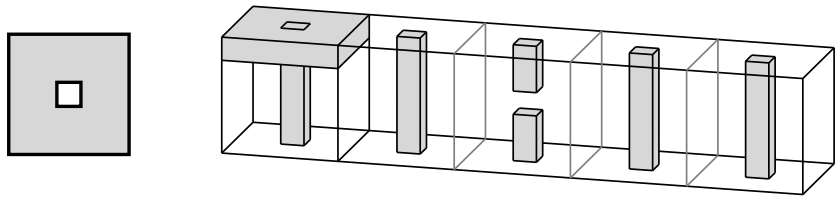}
        \caption{
            The diagram on the right is four-dimensional and shows a
            $5 \times 5 \times 5 \times 5$ region of $\Z^4$.
            Let $x_1, x_2, x_3, x_4$ be the directions of $\Z^4$.
            Each of the five $5 \times 5 \times 5$ cubes corresponds
            to a fixed value of $x_1$. Increasing the value of $x_1$ by $1$
            means jumping
            from a cube to the cube on its right. This four-dimensional
            diagram contains two copies of the two-dimensional tile
            depicted on the left side. One copy is horizontal and can
            be found in the top left part of the diagram. The second
            copy is formed by the vertical columns.
        } 
        \label{fig:coveringholes}
    \end{figure}

    Chalcraft \cite{mathforum, mathoverflow} made the remarkable conjecture
    that every tile $T \subset \Z$, or even $T \subset \Z^n$, does tile
    $\Z^d$ for some $d$.

    \begin{conj}[Chalcraft]
        Let $T \subset \Z^n$ be a tile. Then $T$ tiles $\Z^d$ for some $d$.
    \end{conj}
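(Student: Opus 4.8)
The plan is to reduce the conjecture to a purely finite, group-theoretic statement and then to build an explicit tiling by an inductive construction. First I would record two soft reductions. It suffices to tile by \emph{translates} only, since a tiling by translates is in particular a tiling by isometric copies. Next, it is enough to find \emph{some} discrete box $B = [a_1] \times \cdots \times [a_d] \subseteq \Z^d$ that is partitioned into translates of $T$: any such box tiles $\Z^d$ by the translates coming from the sublattice $a_1\Z \times \cdots \times a_d\Z$, and composing the two partitions exhibits $\Z^d$ as a disjoint union of translates of $T$. So the goal becomes: every tile tiles some box.

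The second reduction turns a box into a finite abelian group. Embed $\Z^n \hookrightarrow \Z^d$ (so that $T \subset \Z^d$), and for moduli $a_1, \dots, a_d$ let $\pi \colon \Z^d \to G := \Z_{a_1} \times \cdots \times \Z_{a_d}$ be the quotient map with kernel $\Lambda = a_1\Z \times \cdots \times a_d\Z$. If each $a_i$ exceeds the extent of $T$ in direction $i$ (for $i > n$ the image of $T$ has a single value in that coordinate, so those moduli are unconstrained), then $(T-T) \cap \Lambda = \{0\}$, i.e. $\pi$ is injective on every translate of $T$. A short check then shows that any tiling of the \emph{group} $G$ by translates of $\pi(T)$ lifts, coset by coset, to a tiling of $\Z^d$ by translates of $T$: each $g \in \Z^d$ lies in the unique tile $(g - \tau) + T$, where $\tau \in T$ is determined by the tile of $G$ containing $\pi(g)$. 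Hence it suffices to show that every finite $T$ tiles, by translates, \emph{some} finite abelian group $G$, under the coordinate embedding above.

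This finite reformulation is where all the difficulty lies, and it cannot be sidestepped cheaply: already in one dimension, $\mathtt{XX.XX}$ tiles no cyclic group $\Z_N$, since such a tiling would unwrap to a tiling of $\Z$, which (as noted in the introduction) does not exist. So genuinely more than one cyclic factor is forced. My plan for the construction is an induction that mimics and generalises the hole-filling pictures above: given $T$, I would place copies in the available group so as to leave only a \emph{structured} remainder --- a translate of a strictly simpler tile --- and then use one or more fresh coordinate directions to accommodate the copies that fill this remainder, exactly as the extra dimension in the $3 \times 3$ and $5 \times 5$ examples provides room to interlock two copies. The induction would run on a suitable complexity measure of $T$, such as the number of nonempty slices in its last coordinate or simply $|T|$, with a single point as the trivial base case.

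The main obstacle is closing this induction. The naive ``fill the hole'' step is dangerous: the copies introduced to cover the remainder themselves create \emph{new} uncovered regions, and without care one obtains an infinite regress rather than a finite partition. A complete proof must arrange each correction to be a translate of a controlled tile whose own tiling has already been secured, so that the process terminates after finitely many added dimensions and yields an exact partition with no gaps or overlaps. I expect this bookkeeping --- forcing the successive remainders to be self-similar or strictly decreasing, and simultaneously verifying disjointness across all the newly introduced directions --- to be the crux of the argument; the reductions above are only the scaffolding that makes such a finite, combinatorial construction the right target.
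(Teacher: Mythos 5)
Your opening reduction is not ``soft'': it replaces the conjecture by a strictly stronger statement that is \emph{false}, and the falsity is witnessed by the very example you cite. Take $T = \mathtt{XX.XX}$, regarded as a subset of the first coordinate axis of $\Z^d$. Every translate of $T$ is contained in a single line $\Z \times \{y\}$ with $y \in \Z^{d-1}$, so a partition of $\Z^d$ into translates of $T$ would restrict to a partition of each such line, i.e.\ to a tiling of $\Z$ by $T$ --- which does not exist. The same argument kills your final finite target: since $\pi(T) \subset \Zk{a_1} \times \{0\}^{d-1}$, every translate of $\pi(T)$ in $G = \Zk{a_1} \times \dotsb \times \Zk{a_d}$ lies in a single coset of $\Zk{a_1} \times \{0\}^{d-1}$, so a tiling of $G$ by translates of $\pi(T)$ restricts to a tiling of the cyclic group $\Zk{a_1}$, which (as $\pi$ is injective on translates of $T$) unwraps to a tiling of $\Z$. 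You observed yourself that $\mathtt{XX.XX}$ tiles no $\Zk{N}$; that same observation shows your reformulated goal --- ``every tile tiles, by translates of a fixed coordinate embedding, some finite abelian group'' --- is unattainable. Copies of $T$ pointing in \emph{different} coordinate directions (exactly what your first sentence discards) are not a convenience but the substance of the theorem: the paper's tilings use translates of $\mathsf{T}_1, \dotsc, \mathsf{T}_d$ in all $d$ directions, and its concluding remarks note that translations \emph{and coordinate permutations} are what suffice, never translations alone.

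Two further problems. First, your intermediate target ``every tile tiles some box'' is false even when all isometries are allowed: as noted in the introduction, the plus-shaped pentomino cannot cover a corner of any cuboid, and there are one-dimensional tiles that tile no cuboid of any dimension. So the only viable finite reformulation is of wrap-around (group) type \emph{and} must permit several orientations of the tile, e.g.\ the statement the paper actually proves, that some $(\Z^b)^p \times G^q$ admits a partition into axis-parallel copies of $T$ and $\pi(T)$ in the various coordinate blocks. Second, even granting a corrected target, your proposal proves nothing about it: the inductive ``hole-filling with fresh dimensions'' is only a plan, and you explicitly defer its crux (termination, disjointness, control of successive remainders). That crux is precisely the content of Lemma~\ref{lem:constructdenser}, Lemma~\ref{lem:coverholes} and Corollary~\ref{cor:tilingtransitive}; what your proposal does establish is essentially only the lifting step, Proposition~\ref{prop:lifting}, which is the easy part.
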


    It is not important if reflections are allowed
    when forming copies of a tile. Indeed, any reflection of an
    $n$-dimensional tile can be obtained by rotating it in $n+1$
    dimensions. It is also not important if only connected tiles are
    considered, as it is an easy exercise to show that any disconnected
    tile in $\Z^n$ tiles a connected tile in $\Z^{2n}$.

    In this paper we prove Chalcraft's conjecture.

    \begin{restatable}{thm}{main} \label{thm:main}
        Let $T \subset \Z^n$ be a tile. Then $T$ tiles $\Z^d$ for some $d$.
    \end{restatable}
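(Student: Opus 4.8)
By a \emph{box} I mean a product of finite intervals $B = [a_1] \times \cdots \times [a_d] \subseteq \mathbb{Z}^d$. Such a box tiles $\mathbb{Z}^d$ by translation along the sublattice $a_1\mathbb{Z} \times \cdots \times a_d\mathbb{Z}$, so if $T$ tiles \emph{any} box in any dimension, then $T$ tiles $\mathbb{Z}^d$ for that $d$. The plan is therefore to reduce Theorem~\ref{thm:main} to the following main lemma: every tile tiles some box. To set up, I would first enclose $T$ in its bounding box $B \subseteq \mathbb{Z}^n$ and write $T' = B \setminus T$ for the set of missing cells; the case $T' = \emptyset$ (i.e.\ $T$ is itself a box) is the trivial base case.

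The guiding principle, already visible in the $3\times 3$- and $5\times 5$-hole examples of the introduction, is that extra dimensions give copies of $T$ room to interlock, so that the cells one copy fails to reach can be supplied by another. Concretely, I would work in the product space $\mathbb{Z}^n \times \mathbb{Z}^n$ and lay down the \emph{horizontal} copies $T \times \{b\}$ for all $b \in B$, together with the \emph{vertical} copies $\{a\} \times T$ for all $a \in T'$. These are pairwise disjoint (the two families live in $T \times B$ and $T' \times B$ respectively), they use only isometric copies of $T$, and their union tiles $B \times B$ except for the residual block $T' \times T'$. The goal is to iterate a step of this kind, each time trading the tiling problem for $T$ against the same problem for a \emph{strictly simpler} tile, until the residue is empty and an honest box has been tiled.

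The difficulty — and where I expect the real work to lie — is closing this recursion. The leftover $T' \times T'$ is controlled by $T'$, whose bounding box sits inside $B$ but whose own complement is essentially a copy of $T$; so the naive measure ``number of missing cells'' does not visibly decrease, and the symmetric construction above threatens to be circular. The plan to break this is to abandon the symmetric product in favour of an \emph{asymmetric lift} that adds a single new coordinate of some length $h$ and arranges the uncovered cells of a near-tiling into \emph{full columns} in the new direction, so that the defect becomes a cylinder $D \times [h]$ over a lower-dimensional, strictly smaller set $D$. One then fills each column using further copies of $T$ and descends on $|D|$. Proving that the defect can always be massaged into such cylindrical form, while ensuring that the column-filling again uses only isometric copies of $T$, is the crux of the argument; granting it, the induction terminates with a fully tiled box, and the reduction of the first paragraph then yields the theorem.
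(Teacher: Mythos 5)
Your plan fails at its foundation: the ``main lemma'' you propose to reduce everything to --- \emph{every tile tiles some box} --- is false, and the paper itself points this out in its introduction. The plus-shaped tile of size $5$ in $\Z^2$ tiles no cuboid of any dimension: any isometric copy consists of a centre $c$ together with $c\pm u$, $c\pm v$ for two orthogonal unit vectors $u,v$, and whichever of its five cells is placed at the minimal corner of a cuboid, the copy contains a cell with some coordinate strictly smaller than the corner's, hence outside the cuboid. There are even one-dimensional examples (symmetric tiles $T\subset\Z$ whose polynomial $\sum_{t\in T}x^t$ has a root off the unit circle tile no cuboid). So no recursion, however ingenious, can terminate with ``an honest box has been tiled''; the approach is unworkable, not merely incomplete. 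The correct weakening --- and the one the paper actually uses --- is to replace the finite box by a discrete torus $\Zk{k}^{d-1}$, equivalently to look for \emph{periodic} tilings of $\Z^d$: a torus has no corners, so the obstruction above vanishes, and partial tilings of $\Z^{d-1}$ by periodic ``strings'' correspond exactly to partial tilings of $\Zk{k}^{d-1}$. Tiling $\Z\times\Zk{k}^{d-1}$ and then lifting is what stands in for your box-tiling step.

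For what it is worth, your two-family construction is correct as far as it goes: the copies $T\times\{b\}$ for $b\in B$ and $\{a\}\times T$ for $a\in T'$ are disjoint and cover exactly $(B\times B)\sm(T'\times T')$, and this ``fill one direction by full lines, handle slices by induction'' pattern is genuinely close to the paper's Propositions~\ref{prop:coverbutone} and~\ref{prop:removedcset}. But your recursion does not close, for the reason you yourself identify: the residue $T'\times T'$ must still be tiled by copies of $T$, not of $T'$, so no complexity measure decreases, and the ``asymmetric lift'' is a wish rather than an argument --- and by the previous paragraph, any version of it whose endpoint is a tiled box must fail. The paper escapes the circularity by going in the opposite direction: rather than passing to a simpler tile, it \emph{enlarges} the tile inside the finite group $G=\Zk{k}^b$. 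Lemma~\ref{lem:constructdenser} shows that for any tile $A\subsetneq G$ there is a strictly larger tile $B\supsetneq A$ such that $G\times\bigl(B^d\sm\bigcup_{S\in\mathcal{S}}S\bigr)$ is $A$-tilable for suitable families of removed sets; Lemma~\ref{lem:coverholes} uses one special infinite direction and a deconvolution argument to make every slice's defect have size $\equiv 1 \pmod{|A|}$; and a reverse induction on $|A|$, which terminates because $A=G$ is trivial, combined with the transitivity and lifting results (Corollaries~\ref{cor:tilingtransitive} and~\ref{cor:completelifting}), converts the resulting torus tilings into a tiling of $\Z^d$.
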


    Interestingly, the problem is not any easier for tiles $T \subset \Z$.
    Indeed, the proof for one-dimensional tiles seems to us to be as hard
    as the general problem.

    The plan of the paper is as follows. In Section~\ref{sec:simplecase}
    we prove a special case of the theorem, namely when $T$ is an
    interval in $\Z$ with one point removed. The aim of this
    section is to demonstrate some of the key ideas in a simple setting.
    The proof of the general case builds on these ideas and on
    several additional ingredients. We give a proof
    of Theorem~\ref{thm:main} in Section~\ref{sec:generalcase}.
    Finally, in Section~\ref{sec:conclusion} we give some open problems.

    We end the section with some general background.
    A lot of work has been done about tiling $\Z^2$ by polyominoes
    (a polyomino being a connected tile in $\Z^2$). Golomb \cite{golomb66}
    proved that every polyomino of size at most $6$ tiles $\Z^2$.
    In \cite{golomb70} he also proved that there is no algorithm which
    decides, given a finite set of polyominoes, if $\Z^2$ can be tiled
    with their copies -- this is based on the work of Berger \cite{berger66},
    who showed
    a similar undecidability result for Wang tiles (which are certain coloured
    squares). However, it is not known if such an algorithm exists for single
    polyominoes. A related unsolved problem is to determine whether there
    is a polyomino which tiles $\Z^2$ but such that every tiling is
    non-periodic. On the other hand, Wijshoff and van Leeuwen
    \cite{wijshoff84} found an algorithm which determines if disjoint
    translates (rather than translates, rotations and reflections) of
    a single given polyomino tile $\Z^2$. A vast number of results and
    questions regarding tilings of $\Z^2$ by polyominoes and other shapes
    are compiled in Gr\"unbaum and Shephard \cite{grunbaum13}.

    One may also wish to know if a given polyomino tiles some finite region
    of $\Z^2$, say a rectangle. This class of questions has also received
    significant attention, producing many beautiful techniques and invariants
    -- see, for example, \cite{conway90, golomb89, klarner69}. In the context
    of this paper, we observe that there are tiles which cannot tile any
    (finite) cuboid of any dimension. For example, consider the plus-shaped
    tile of size $5$ in $\Z^2$: this tile cannot cover the corners of any
    cuboid. In fact, there are one-dimensional such tiles. For example,
    let $T \subset \N$, where $\N = \{1, 2, \dotsc\}$, be a symmetric tile
    (meaning that $-T$ is a translate of $T$) whose associated
    polynomial $p(x) = \sum_{t \in T} x^t$ does not have all of its non-zero
    roots on the unit circle -- it turns out that such $T$ cannot tile a
    cuboid (see \cite{mathoverflow}).

    \section{Tiling $\Z^d$ by an interval minus a single point}
    \label{sec:simplecase}

    \subsection{Overview}
    \label{subsec:overview}

        Before starting the proof of Theorem~\ref{thm:main}, we demonstrate
        some of the key ideas in a simple setting, where the tile is a
        one-dimensional interval with one point removed. We give a
        self-contained proof of the general case in
        Section~\ref{sec:generalcase}, but it will build on the ideas in this
        section.

        We write $[k] = \{1, \dotsc, k\}$.

        \begin{restatable}{thm}{simpletheorem} \label{thm:simplecase}
            Fix integers $k \ge 3$ and $i \in \{2, \dotsc, k-1\}$ and
            let $T$ be the tile $[k] \sm \{i\}$. Then $T$ tiles $\Z^d$
            for some $d$.
        \end{restatable}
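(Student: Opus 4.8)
Begin by normalizing: translate $T$ so that it is a solid block $A=\{1,\dots,i-1\}$, then one empty cell, then a solid block $B=\{i+1,\dots,k\}$, and set $a=i-1$, $b=k-i$, so $|T|=a+b=k-1$ while $T$ spans $k$ consecutive integers. Since every integer isometry of $\mathbb{Z}^d$ is a signed coordinate permutation (composed with a translation), each copy of $T$ in $\mathbb{Z}^d$ is axis-parallel: it lies along a single coordinate axis and is a translate of $[k]\setminus\{i\}$ or of its reflection $[k]\setminus\{k+1-i\}$ (reflections being harmless, as noted in the introduction). So tiling $\mathbb{Z}^d$ amounts to partitioning it into axis-parallel ``broken intervals.'' It is enough to produce a \emph{periodic} tiling, equivalently a finite cluster of copies of $T$ that is a fundamental domain for a full-rank sublattice of $\mathbb{Z}^d$. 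I will not insist that this cluster be a box: some tiles of this form provably tile no cuboid of any dimension (cf.\ the discussion of symmetric tiles whose associated polynomial has a root off the unit circle), so the target region must be allowed to be a ``notched'' shape.

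The single feature obstructing $T$ from tiling $\mathbb{Z}$ is its one missing cell, and in higher dimensions this gap can be plugged by the solid body of a transversely oriented copy of $T$. The engine of the proof is therefore an interlocking scheme: lay copies of $T$ along one axis, plug each gap with a copy along a second axis, plug the gaps those create with copies along a first or a further axis, and repeat periodically. The key to making this close up is a self-referential bundling. I plan to position a parallel family of $k-1$ copies of $T$ so that their individual gaps line up into a single axis-parallel interval of the pattern $[k]\setminus\{i\}$ --- that is, into an exact copy of $T$ --- which can then be filled by one additional copy of $T$. Since the gaps of $k-1$ tiles are absorbed by one new tile whose own gap is fed into the next bundle, the bookkeeping can be made to balance, and this is precisely the kind of interlocking that already tiles $\mathbb{Z}^2$ for $\mathtt{XX.XX}$; the aim is to realize it in higher dimensions for arbitrary $a,b$.

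The hard part will be global consistency and periodicity: local plugging must assemble into an exact partition with no overlaps or omissions. The naive axis-by-axis product attempt fails exactly here. If one tiles every line in each direction using a common offset, the uncovered cells form a sublattice stretched by the factor $k$ in every direction, a scaled copy of $\mathbb{Z}^d$ into which \emph{no} copy of $T$ fits, and the construction stalls. To avoid this trap I plan to stagger (shear) the offsets from line to line and, crucially, to allow as many dimensions $d$ as needed: the extra dimensions provide room to route each gap transversely and return it so that the chains of plugs close into cycles, while the staggering converts what would be an untileable stretched lattice of residual gaps into families that align along a fresh axis and hence assemble into copies of $T$. The crux is to exhibit a staggering for which the resulting periodic pattern is an exact tiling; once that is in place it immediately yields a tiling of $\mathbb{Z}^d$ for the corresponding $d$, establishing the theorem.
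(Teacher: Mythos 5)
Your proposal sets up the problem correctly (axis-parallel copies, reduction to a periodic pattern, the observation that no box can work) and correctly identifies where the difficulty lies, but it stops exactly there: your final sentence, ``the crux is to exhibit a staggering for which the resulting periodic pattern is an exact tiling,'' is a restatement of the theorem, not a step towards proving it. The one concrete mechanism you do propose --- bundling $k-1$ parallel copies of $T$ so that their gaps line up into a fresh copy of $T$, to be plugged by one transverse tile --- is the direct generalization of the $\mathtt{XX.XX}$ picture in Figure~\ref{fig:xx.xx}, and the paper points out that this cannot work as stated: $\mathtt{XXX.XXX}$ does not tile $\Z^2$, so no amount of local interlocking of this kind closes up in the plane, and you give no mechanism for how the extra dimensions and the ``staggering'' are actually to be used. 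Since every tile's gap must itself be plugged by another tile, the plugging relation never terminates, and a proof must explain why this infinitely deferred debt can be organized globally and consistently; naming the need for consistency is not the same as establishing it.

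What is missing is precisely the pair of devices the paper builds. First, the paper makes the slices periodic from the outset by passing to the discrete torus: a partial tiling of a slice $\Z^{d-1}$ by \emph{strings} (lines with every $k$-th point removed) is the same thing as a partial tiling of $\Zk{k}^{d-1}$ by lines minus a point, and since $|\Zk{k}^{d-1}| \equiv 1 \pmod{k-1}$, any such partial tiling must leave uncovered a set of size $\equiv 1 \pmod{k-1}$. Second, it exhibits a concrete family of points (the corners $C_{d-1}$) such that \emph{any} subset of corners of size $\equiv 1 \pmod{k-1}$, provided it is not too large, can serve as the uncovered set (Lemma~\ref{lem:removedcorners}, proved by the hole/dagger induction of Propositions~\ref{prop:coverbutone} and~\ref{prop:movepoint}); it then uses a deconvolution function $f$ satisfying $\sum_{y \in T} f(x-y) \equiv 1 \pmod{k-1}$ (Proposition~\ref{prop:deconvolution-simple}) to lay tiles along the special direction so that every slice gets exactly an admissible set of corners pre-covered (Lemma~\ref{lem:specialdimension}), after which each slice is finished off independently by strings. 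This modular bookkeeping --- identifying \emph{which} sets may be left over, and arranging the special direction to feed each slice a leftover set in the right residue class --- is the actual mathematical content of the theorem, and your proposal contains no analogue of it.
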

        
        The tile $T = [k] \sm \{i\}$ will remain fixed throughout
        this section.

        The proof is driven by two key ideas. A first natural idea is to
        use \emph{strings}, where a string is a one-dimensional infinite line
        in $\Z^d$ with every $k$-th point removed. Note that any string is a
        union of disjoint copies of $T$. An obvious way to use strings
        would be to partition $\Z^d$ into them. Although this is an
        attractive idea, it is not possible, for the following simple reason:
        if we consider just the fixed cuboid $[k]^d \subset \Z^d$, then 
        every string intersects it in exactly $0$ or $k-1$
        points, but the order of $[k]^d$ is not divisible by $k-1$.

        This suggests a refinement of the idea. We will try to use strings
        parallel to $d-1$ of the $d$ directions, while the remaining
        direction will be special and
        copies of $T$ parallel to it will be used even without forming
        strings. In other words, we will view $\Z^d$ as $\Z \times \Z^{d-1}$,
        that is, as being partitioned into $(d-1)$-dimensional \emph{slices}
        according to the value of the first coordinate. We will first
        put down some tiles parallel to the first direction (each such tile
        intersects multiple slices), and then complete the tilings in each
        slice separately by strings.

        To do this we need another idea. What subsets of $\Z^{d-1}$
        can be tiled by strings? Note that a partial tiling of $\Z^{d-1}$
        by strings can be identified with a partial tiling of the discrete
        torus $\Zk{k}^{d-1}$ (where $\Zk{k}$ denotes the integers modulo $k$),
        where a tile in $\Zk{k}^{d-1}$ means any line with one point
        removed.  The size of $\Zk{k}^{d-1}$ is $k^{d-1}
        \equiv 1 \pmod{k-1}$, so any such partial tiling of $\Zk{k}^{d-1}$
        must leave out $1 \pmod{k-1}$ points. Of course, it is far
        from true that any subset of $\Zk{k}^{d-1}$ of size a multiple
        of $k-1$ may be partitioned into tiles. However, our plan is
        to find a large supply of sets that do have this property. In
        particular, it turns out that a key idea will be to find
        a large set $C \subset \Zk{k}^{d-1}$
        such that for any choice of distinct elements
        $x_1, \dotsc, x_m \in C$ with $m \equiv 1 \pmod{k-1}$, $T$ does tile
        $\Zk{k}^{d-1} \sm \{x_1, \dotsc, x_m\}$.

        \begin{figure}[H]
            \centering
            \includegraphics[scale=0.8]{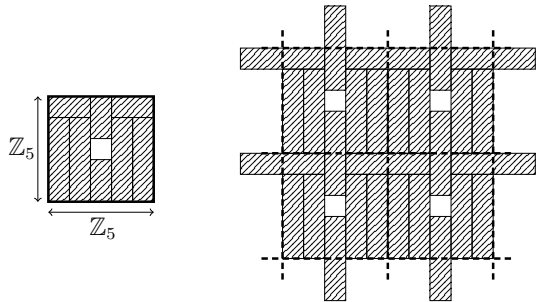}
            \caption{
                A partial tiling of $\Z_k^2$ (here $k=5$) corresponds to a
                partial tiling of $\Z^2$ by strings.
            } 
            \label{fig:torustoperiodic}
        \end{figure}

        These ideas work together as follows (see Figure~%
        \ref{fig:specialdimension}). First, in $\Z \times
        \Zk{k}^{d-1}$ (for large $d$) we find a subset $X$ which is a disjoint
        union of translates of $T \times \{0\}^{d-1}$ and has the property
        that for any $n \in \Z$ the set $\{ x \in \Zk{k}^{d-1} : (n, x) \in
        X\}$ is a subset of $C$ of size congruent to $1$ modulo $k-1$.
        Then $T$ tiles $(\{n\} \times \Zk{k}^{d-1}) \sm X$. This holds for
        all $n \in \Z$, so in fact $T$ tiles $\Z \times \Zk{k}^{d-1}$, and hence
        it tiles $\Z^d$, establishing Theorem~\ref{thm:simplecase}.

        \begin{figure}[ht]
            \centering
            \includegraphics[scale=0.8]{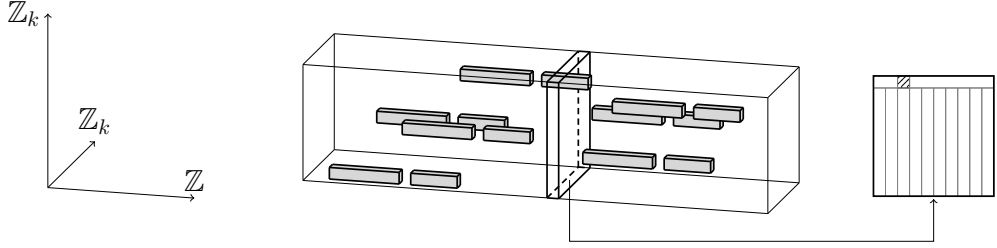}
            \caption{
                The aim is to put down tiles parallel to one of the directions
                so that the remainder of each slice could be tiled
                by strings. This diagram only symbolically visualises
                this principle. In particular, the slices here are
                two-dimensional, while in the proof they can have much
                higher dimension.
            } 
            \label{fig:specialdimension}
        \end{figure}

        The rest of this section is organised as follows.
        In Section~\ref{subsec:removedcorners-simple} we consider partial
        tilings by strings.
        In Section~\ref{subsec:specialdimension-simple} we consider the
        special direction.
        Both ideas are combined in Section~\ref{subsec:formalities-simple},
        where a full proof of Theorem~\ref{thm:simplecase} is given.

    \subsection{Tiling $\Zk{k}^d$ with some elements removed}
    \label{subsec:removedcorners-simple}

        For any $1 \le j \le d$, define the \emph{$j$-th corner}
        of $\Zk{k}^d$ to be $c_{j,d}$ where
        $$
            c_{j,d} =
                    (
                        \underbrace{%
                            0,\dotsc,0,
                            \overset{%
                                \overset{%
                                    \mathclap{j\text{-th coordinate}}
                                }{%
                                    \downarrow
                                }
                            }{%
                                k-1
                            },
                            0,\dotsc,0
                        }_{d \text{ coordinates}}
                    )
            \in \Zk{k}^d.
        $$
        Write $C_d = \{c_{j,d} : j = 1, \dotsc, d\}$ for the set of corners.

        \begin{figure}[H]
            \centering
            \includegraphics{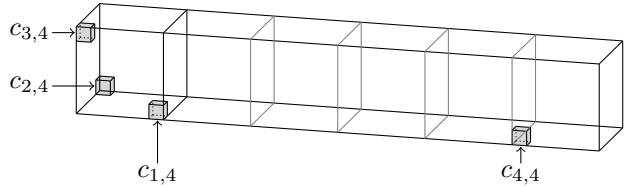}
            \caption{
                The set of corners $C_4$ when $k = 6$. In this diagram
                the space $\Zk{6}^4 = \{(x_1, x_2, x_3, x_4) : x_i \in
                \{0, \dots, 5\}\}$ is split from left to right, according
                to the value of $x_4$, into $6$ three-dimensional slices.
            } 
            \label{fig:corners}
        \end{figure}

        Looking ahead, our aim later will be to provide some copies of $T$
        in the $x_1$-direction in $\Z \times \Zk{k}^d$, at heights
        corresponding to points of $C_d$, and in such a way that what
        remains in each $\Zk{k}^d$ can be partitioned into lines with one
        point removed. But first we need to create a useful supply of
        such subsets of $\Zk{k}^d$.

        Recall that $|\Zk{k}^d| \equiv 1\pmod{k-1}$, so if $T$ tiles some
        set $X \subset \Zk{k}^d$ (here and in the remainder
        of this section $T$ is identified with its image under
        the projection $\Z \to \Zk{k}$, so its copies in $\Zk{k}^d$ are lines
        with one point removed), then $|\Zk{k}^d \sm X|
        \equiv 1\pmod{k-1}$. In this section we will prove
        Lemma~\ref{lem:removedcorners}, which is an approximate converse
        of this statement.

        \begin{lem} \label{lem:removedcorners}
            Let $d \ge 1$ and suppose that $S \subset C_d$ is such
            that $|S| \equiv 1 \pmod{k-1}$ and $|S| \le
            d - \log_k{d}$. Then $T$ tiles $\Zk{k}^d \sm S$.
        \end{lem}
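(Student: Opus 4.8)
The plan is to induct on $d$, peeling off one coordinate at a time and reducing to the same type of problem in one lower dimension. It is convenient to rewrite the hypothesis $|S|\le d-\log_k d$ in the equivalent form $k^{\,r}\ge d$, where $r=d-|S|$ is the number of corners \emph{not} lying in $S$; so the hypothesis says precisely that there are enough spare corners. Throughout I will use the fact (noted above) that in $\Zk{k}^{d}$ a copy of $T$ is exactly a line with one point deleted, and that the deleted point may be \emph{any} point of the line, since translating $T\subset\Z$ moves its hole through every residue modulo $k$.

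For the inductive step, view $\Zk{k}^{d}=\Zk{k}^{d-1}\times\Zk{k}$ as the disjoint union of $k$ slices indexed by the last coordinate, which I call the \emph{special} direction. Since $|S|<d$, at least one corner is not removed; relabel the directions so that $c_{d,d}\notin S$ and take the special direction to be this one. Now tile in the special direction as follows. For each removed corner $c_{j,d}$ (necessarily $j\le d-1$), the special-direction line through it has its slice-$0$ point deleted, leaving exactly $k-1$ points, which form a single copy of $T$; lay this copy down. On every remaining special-direction line, lay down one copy of $T$ whose hole is in slice $0$. Then slices $1,\dots,k-1$ are completely tiled, while the untiled part of slice $0$ is precisely $\Zk{k}^{d-1}$ with the corners $\{\,c_{j,d-1}:c_{j,d}\in S\,\}$ removed: the same problem, with the same value of $|S|$, in dimension $d-1$. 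The number of spare corners drops from $r$ to $r-1$, so this step is legitimate exactly when $k^{\,r-1}\ge d-1$, and we may then finish by induction. The base case $d=1$ forces $|S|=1$ and asks us to tile $\Zk{k}\setminus\{k-1\}$, itself a single copy of $T$.

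The difficulty is that each step spends one spare corner ($r\mapsto r-1$) while $|S|$ is unchanged, so when $|S|$ is large the recursion stalls: it terminates not at $d=1$ but at a \emph{dense} configuration $\Zk{k}^{m}\setminus S$ in which almost all corners are removed, with only $r\approx\log_k m$ spare corners remaining. This is exactly the point at which the $\log_k d$ slack is exhausted, and I expect this dense configuration to be the main obstacle, requiring a genuinely different, non-recursive construction rather than the slice-by-slice argument above. The natural idea is to use the $r$ spare corners as an \emph{addressing} device: since $k^{\,r}\ge m$, one can assign to each of the $m$ removed-corner directions a distinct string in $\{0,\dots,k-1\}^{r}$ and use the $r$ spare directions as address coordinates, so as to route simultaneously and consistently the holes of the copies of $T$ that must avoid all the removed corners at once. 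Turning such an assignment into an honest partition into copies of $T$ — verifying that the routed tiles fit together and cover every point exactly once — is the step I would expect to be the technical heart of the proof.
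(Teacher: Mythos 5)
You have correctly carried out the easy half of the argument and correctly located the hard half, but the hard half is missing. Your slice-peeling step is sound: choosing a spare-corner direction as special, laying one copy of $T$ (hole at the removed corner) on each special-direction line through a removed corner and one copy with hole in slice $0$ on every other line, does reduce $\Zk{k}^d \sm S$ to $\Zk{k}^{d-1} \sm S'$ with $|S'| = |S|$. But, as you note yourself, each such step spends a spare corner while $|S|$ stays fixed, so the recursion never reaches the base case when $|S|$ is large; it terminates at a dense configuration in which (nearly) all corners are removed. That configuration is not a degenerate leftover — it \emph{is} the content of Lemma~\ref{lem:removedcorners}; everything before it is a reduction of the problem to itself. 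For that core you offer only an intention (``use the spare directions as address coordinates and route the holes consistently''), with no tiles specified and no disjointness or covering to check. So the proposal has a genuine gap, exactly at the step you flag as the technical heart.

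For comparison, the paper closes this gap by running the induction in the opposite direction: upward in dimension, starting from $\Zk{k}^r$ with $r = d - |S|$, rather than downward from $\Zk{k}^d$. Call $X$ a \emph{hole} if $T$ tiles the complement of $X$. The key step (Proposition~\ref{prop:movepoint}, iterated in Corollary~\ref{cor:exchangeall-simple}) is: if $X$ is a hole in $\Zk{k}^e$ and $x \in X$, then $\big((X \sm \{x\}) \times \{0\}\big) \cup \{c_{e+1,e+1}\}$ is a hole in $\Zk{k}^{e+1}$ --- one point of the hole is covered using the new dimension (a vertical copy of $T$ through $x$, plus Proposition~\ref{prop:coverbutone} in each of the other slices; this is essentially your own slice decomposition read in reverse), at the cost of leaving the new corner uncovered. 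The lemma then follows by exhibiting a hole of size exactly $m = |S|$ in $\Zk{k}^r$ (a single point together with $(m-1)/(k-1)$ disjoint copies of $T$, which exists because $k^r \ge d \ge m$ --- this is where the hypothesis $|S| \le d - \log_k d$ enters) and trading its $m$ points for the $m$ corners of $S$, one dimension at a time. Your ``addressing'' intuition is not entirely off the mark --- the $m$ points of that hole in $\Zk{k}^r$ are precisely distinct ``addresses'' in $\{0,\dotsc,k-1\}^r$ matched to the removed corners, and $k^r \ge m$ is the statement that there are enough of them --- but two essential ingredients are absent from your sketch: the addresses cannot be arbitrary distinct strings (they must themselves form a hole), and the mechanism converting address points into corners is a one-dimension-at-a-time exchange, not a simultaneous routing. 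Without these, the lemma is not proved.
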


        In fact, this lemma holds even without the assumption that
        $|S| \le d - \log_k{d}$, but we keep it for the sake of simpler
        presentation.

        We will prove Lemma~\ref{lem:removedcorners} at the end of this
        section. Meanwhile, we collect the tools needed for the proof.
        In fact, there are several ways to prove Lemma~%
        \ref{lem:removedcorners}. The method outlined here is quite general,
        and we will build on it in Section \ref{sec:generalcase}.

        We start with a simple proposition.  

        \begin{prop} \label{prop:coverbutone}
            Let $d \ge 1$ and $x \in \Zk{k}^d$. Then
            $T$ tiles $\Zk{k}^d \sm \{x\}$.
        \end{prop}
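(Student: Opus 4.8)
The plan is to induct on $d$, after first using the symmetry of the torus to move the removed point to the origin. Since translations of $\Zk{k}^d$ are isometries carrying copies of $T$ to copies of $T$, I may assume without loss of generality that $x = 0$. The base case $d = 1$ is immediate: the set $\Zk{k} \sm \{0\}$ is a single line of $\Zk{k}$ with one point deleted, which is exactly a copy of $T$. Here I rely on the observation already recorded above, namely that because $T$ has bounding box of length exactly $k$, every copy of $T$ in $\Zk{k}^d$ is a full line in some coordinate direction with one of its $k$ points removed, and conversely any such set is a copy of $T$ — translating within the line lets one choose freely which point is removed.

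For the inductive step I would write $\Zk{k}^d = \Zk{k}^{d-1} \times \Zk{k}$, viewing the last coordinate as a height that splits the torus into $k$ slices, and take the removed point to be $0 = (0,0)$. The bottom slice with the origin deleted is $(\Zk{k}^{d-1} \sm \{0\}) \times \{0\}$, which is a copy of $\Zk{k}^{d-1}$ minus a point sitting inside a single slice; by the induction hypothesis it is tiled by copies of $T$ using the first $d-1$ directions. For the remaining heights, for each $y \in \Zk{k}^{d-1}$ the vertical segment $\{y\} \times \{1, \dotsc, k-1\}$ is the line through $y$ in the $d$-th direction with its height-$0$ point removed, hence a copy of $T$; taking all $k^{d-1}$ of these covers every point of positive height.

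It then only remains to check that these pieces are disjoint and cover exactly $\Zk{k}^d \sm \{0\}$. The inductive tiling covers all $(y,0)$ with $y \ne 0$, while the vertical segments cover all $(y,h)$ with $h \in \{1,\dotsc,k-1\}$, so together they account for every point except $(0,0)$, and with no overlaps. This completes the induction and hence the proof.

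The one thing to get right — more a matter of bookkeeping than a genuine obstacle — is the identification of admissible pieces: one must be sure that a line minus a single point really is a copy of $T$ for \emph{every} choice of which point is removed, which is precisely what the bounding-box-length-$k$ remark guarantees. Once this is in hand the vertical segments of height $k-1$ and the inductive horizontal tiling of the bottom slice fit together with no slack, consistent with the fact that $k^d - 1$ is divisible by $k-1$.
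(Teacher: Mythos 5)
Your proof is correct, but your inductive step uses a decomposition that is essentially dual to the paper's, so it is worth recording the difference. The paper writes $x = (\hat{x}, x_d)$, applies the induction hypothesis in \emph{every} slice --- tiling $(\Zk{k}^{d-1} \sm \{\hat{x}\}) \times \{j\}$ for each of the $k$ values of $j$ --- and then covers the one remaining column $\{\hat{x}\} \times (\Zk{k} \sm \{x_d\})$ with a \emph{single} vertical copy of $T$. You instead apply the induction hypothesis only in the slice containing the removed point, and cover everything at the other $k-1$ heights with a brush of $k^{d-1}$ parallel vertical copies of $T$, one through each $y \in \Zk{k}^{d-1}$. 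Both arguments rest on the same two ingredients (induction on $d$, plus the observation that any coordinate line of the torus minus any one of its points is a copy of $T$, which you justify correctly by translating within the line), and both partition $\Zk{k}^d \sm \{x\}$ exactly, so each is a complete proof. If anything, your version invokes the induction hypothesis only once per step and, when unrolled, gives a fully explicit tiling: comb direction $d$ at all nonzero heights, then direction $d-1$ inside the height-$0$ slice, and so on down to dimension one. The paper's version instead keeps the column through $x$ intact as one tile and recurses in all $k$ slices, which is what its accompanying figure depicts. Your initial reduction to $x = 0$ by translation is valid (translations of the torus carry copies of $T$ to copies of $T$) but is not needed in the paper's formulation, which handles general $x$ directly.
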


        \begin{proof}[Proof (see Figure~\ref{fig:onemissing})]
            Use induction on $d$. If $d = 1$, then $\Zk{k}\sm\{x\}$ is
            itself a translate of $T$. Now suppose that $d \ge 2$ and
            write $x = (x_1, \dotsc, x_d)$, $\hat{x} = (x_1, \dotsc, x_{d-1})$.
            By the induction hypothesis, for each $j \in \Zk{k}$,
            $(\Zk{k}^{d-1}\sm\{\hat{x}\}) \times \{j\}$ can be tiled with
            copies of $T$.  It remains to tile $\{\hat{x}\} \times (\Zk{k}
            \sm \{x_d\})$, but this is itself a copy of $T$.
        \end{proof}

        \begin{figure}[H]
            \centering
            \includegraphics{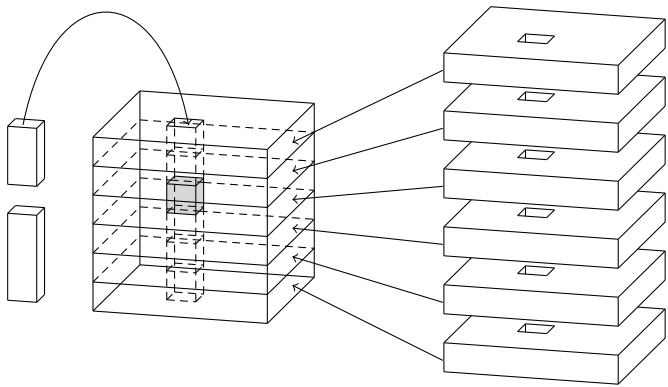}
            \caption{
                The induction step in the proof
                of Proposition~\ref{prop:coverbutone}. The grey cube
                represents $x$.  The vertical column in which $x$ lies,
                without $x$ itself, is a copy of $T$. Each horizontal slice
                minus the
                point in this column can be tiled by the induction hypothesis.
            } 
            \label{fig:onemissing}
        \end{figure}

        Let $X \subset \Zk{k}^d$ (for any $d \ge 1$) be such that
        $T$ tiles $\Zk{k}^d \sm X$. We will say that such $X$ is a
        \emph{hole} in $\Zk{k}^d$.
        The intuition for $X$ is that it
        is a set that remains uncovered after an attempt to tile
        $\Zk{k}^d$ by copies of $T$.

        We can identify $X$ with a higher-dimensional set
        $\up{X} = X \times \{0\} \subset \Zk{k}^{d+1}$.
        One can easily verify that $\up{X}$ is a hole in
        $\Zk{k}^{d+1}$. More importantly, we will show in the
        following proposition that a single additional point of $X'$ can
        be covered in exchange for leaving the $(d+1)$-st corner of
        $\Zk{k}^{d+1}$ uncovered (see Figure~\ref{fig:movepoint}). 
        This is why, for any $S \subset \Zk{k}^d$, we define
        $$
            \uup{S} = (S \times \{0\}) \cup \{ c_{d+1, d+1} \}
                    \subset \Zk{k}^{d+1}.
        $$
        Note that the definition of $\uup{S}$ and the definition of $S$
        being a hole depend not only on $S$, but also on the dimension of the
        underlying discrete torus $\Zk{k}^d$. For $m \ge 1$, we will
        use the shorthand $\uup[m]{S}$ to denote the result of $m$
        consecutive applications of the $\uup{}$ operation to $S$, that is,
        $$
            \uup[m]{S} = S{\underbrace{^{\dagger\dotsc\dagger}}_{m}}
                       \subset \Zk{k}^{d+m}.
        $$

        \begin{figure}[H]
            \centering
            \includegraphics[scale=0.5]{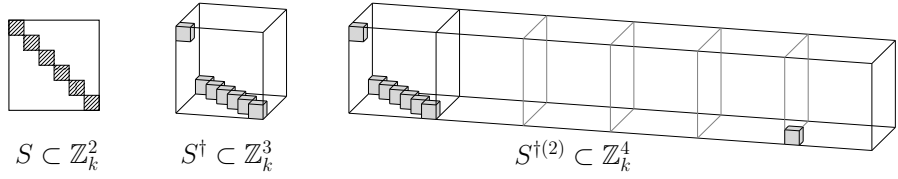}
            \caption{
                Suppose $S$ is the subset of $\Zk{k}^2$ given in
                the diagram on the left (here $k=6$). The diagram in the
                middle depicts $\uup{S}$, and the diagram on
                the right depicts $\uup[2]{S}$. Observe that $S$ is
                a hole in $\Zk{k}^2$, but $\uup{S}$ and $\uup[2]{S}$
                are not holes in $\Zk{k}^3$ and $\Zk{k}^4$, respectively.
            } 
            \label{fig:dagger}
        \end{figure}

        \begin{prop} \label{prop:movepoint}
            Let $d \ge 1$ and let $X \subset \Zk{k}^d$ be a hole.
            Then for each $x \in X$ the set $\uup{(X \sm \{x\})}$ is a
            hole in $\Zk{k}^{d+1}$.
        \end{prop}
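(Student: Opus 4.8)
The plan is to view $\Zk{k}^{d+1}$ as $\Zk{k}^d \times \Zk{k}$, thinking of the last coordinate as a \emph{height} that splits the torus into slices $\Zk{k}^d \times \{j\}$. Since copies of $T$ are lines with one point removed, I will use \emph{vertical} copies (a line in the last direction minus a point) and \emph{horizontal} copies (lying inside a single slice). Unwinding the definitions, the set I must tile is $\Zk{k}^{d+1} \sm \bigl((X \sm \{x\}) \times \{0\} \cup \{c_{d+1,d+1}\}\bigr)$, where $c_{d+1,d+1} = (0, \dotsc, 0, k-1)$ sits over the origin of $\Zk{k}^d$ at height $k-1$. Intuitively the hole $X$ will live in slice $0$, but I must additionally cover the point $x$ there while leaving the new corner uncovered at height $k-1$.

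First I would dispose of slice $0$ using the hypothesis: as $X$ is a hole, $T$ tiles $\Zk{k}^d \sm X$, and laying this tiling horizontally in slice $0$ covers exactly $(\Zk{k}^d \sm X) \times \{0\}$. To also cover $x$ in slice $0$, I place a single vertical copy of $T$ through $x$, namely $\{x\} \times \{0, 1, \dotsc, k-2\}$ (the vertical line minus its top point). This covers $(x,0)$ — so that slice $0$ is now uncovered precisely on $(X \sm \{x\}) \times \{0\}$, as required — at the cost of also occupying heights $1, \dotsc, k-2$ above $x$.

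The main obstacle is now apparent: after these moves, $x$ and the corner's base-point $0$ are in general unrelated positions of $\Zk{k}^d$, so one cannot simply run a line of tiles from one to the other. The key observation that resolves this is that the remaining region, restricted to each height $v \ge 1$, is precisely $\Zk{k}^d$ with a single point deleted. Indeed, for $v \in \{1, \dotsc, k-2\}$ the only deleted point is $x$ (occupied by the vertical tile), giving $(\Zk{k}^d \sm \{x\}) \times \{v\}$; and for $v = k-1$ the only deleted point is the corner, giving $(\Zk{k}^d \sm \{0\}) \times \{k-1\}$. Each of these is tileable by horizontal copies of $T$ by Proposition~\ref{prop:coverbutone}. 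Placing these $k-1$ slice-tilings completes the construction.

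Finally I would check that the three families of tiles are pairwise disjoint and cover everything except $\uup{(X \sm \{x\})}$: slice $0$ is handled by the first two parts, and since $x \in X$ these do not overlap; over $x$ the vertical tile and the slice-tilings (which avoid $x$ below height $k-1$) meet without overlap; and at height $k-1$ the only uncovered point is the corner $c_{d+1,d+1}$. Since slices of different heights are automatically disjoint, this works uniformly whether or not $x = 0$, so no case split is needed.
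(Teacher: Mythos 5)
Your proposal is correct and follows essentially the same argument as the paper: a horizontal tiling of $(\Zk{k}^d \sm X) \times \{0\}$ from the hole hypothesis, one vertical copy $\{x\} \times (\Zk{k} \sm \{k-1\})$ through $x$, and then Proposition~\ref{prop:coverbutone} applied to each remaining slice (deleting $x$ at heights $1,\dotsc,k-2$ and the corner's base point at height $k-1$). The decomposition, the choice of vertical tile, and the use of Proposition~\ref{prop:coverbutone} all match the paper's proof exactly.
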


        \begin{figure}[H]
            \centering
            \includegraphics[scale=0.5]{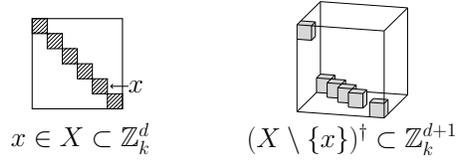}
            \caption{
                An illustration of the statement of Proposition~%
                \ref{prop:movepoint}. The aim is to show that $T$ tiles
                $\Zk{k}^{d+1} \setminus (X \setminus \{x\})^{\dagger}$.
            } 
            \label{fig:movepoint}
        \end{figure}

        \begin{proof}[Proof (see Figure~\ref{fig:movepointproof})]
            Use (i) a tiling of $\Zk{k}^d \sm X$ for $(\Zk{k}^d \sm X) \times
            \{0\}$, and (ii) one copy of $T$ to cover
            $\{x\}\times(\Zk{k}\sm\{k-1\})$. By Proposition~%
            \ref{prop:coverbutone}, (iii) $(\Zk{k}^d\sm\{(0,\dotsc,0)\})\times
            \{k-1\}$ and (iv) $(\Zk{k}^d\sm\{x\})\times\{i\}$, $i \in
            \{1,\dotsc,k-2\}$, can each be tiled by copies of $T$. 
        \end{proof}

        \begin{figure}[H]
            \centering
            \includegraphics[trim=0 5mm 0 0]{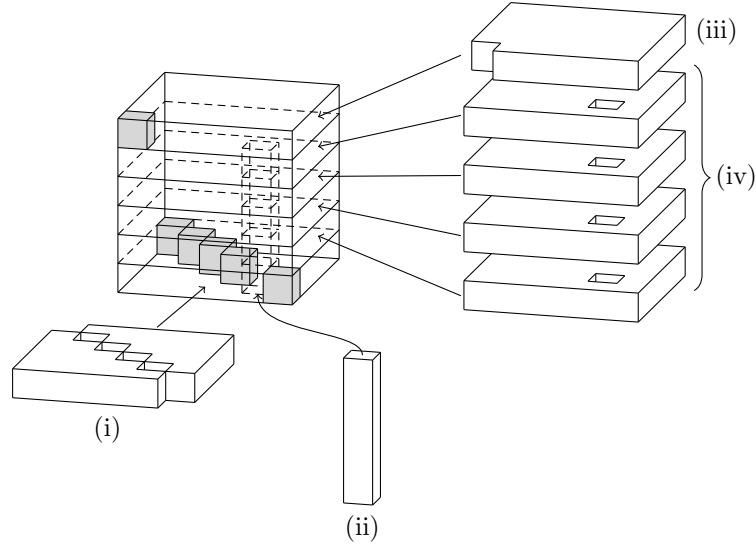}
            \caption{
                The bottom horizontal piece (i) is tilable because $X$ is a
                hole, and the other horizontal pieces (iii) and (iv) are tilable
                by Proposition~\ref{prop:coverbutone}.  The remaining vertical
                column (ii) is a copy of $T$.
            } 
            \label{fig:movepointproof}
        \end{figure}

        We will apply Proposition~\ref{prop:movepoint} inductively,
        that is, in the form of the following corollary.

        \begin{cor} \label{cor:exchangeall-simple}
            Let $d \ge 1$ and let $X \subset \Zk{k}^d$ be a hole.
            Then for any distinct elements $x_1, \dotsc x_m \in X$, the
            set $\uup[m]{(X \sm \{x_1, \dotsc, x_m\})}$ is a hole
            in $\Zk{k}^{d+m}$. \qed
        \end{cor}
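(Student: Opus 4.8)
The plan is to induct on $m$, using Proposition~\ref{prop:movepoint} as the single-step engine. For $m = 1$ the statement is exactly Proposition~\ref{prop:movepoint} applied with $x = x_1$. For the inductive step, I would assume the claim for some $m \ge 1$ and fix distinct elements $x_1, \dotsc, x_{m+1} \in X$. Applying the induction hypothesis to the first $m$ of these points yields that
$$
    Y := \uup[m]{(X \sm \{x_1, \dotsc, x_m\})} \subset \Zk{k}^{d+m}
$$
is a hole, and the goal is to perform one more exchange to cover $x_{m+1}$.

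The next step is to locate $x_{m+1}$ inside $Y$. Unwinding the definition of $\uup[m]{}$, the set $Y$ is the disjoint union of $(X \sm \{x_1, \dotsc, x_m\}) \times \{0\}^m$ with the $m$ corner points $c_{d+1,d+m}, \dotsc, c_{d+m,d+m}$ accumulated by the successive $\uup{}$ operations. Since $x_{m+1}$ lies in $X$ and is distinct from $x_1, \dotsc, x_m$, the point $\tilde{x} := (x_{m+1}, 0, \dotsc, 0) \in \Zk{k}^{d+m}$ belongs to the first part, and hence $\tilde{x} \in Y$. I would then apply Proposition~\ref{prop:movepoint} to the hole $Y$ and the point $\tilde{x}$, concluding that $\uup{(Y \sm \{\tilde{x}\})}$ is a hole in $\Zk{k}^{d+m+1}$.

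It then remains to identify this set with the desired $\uup[m+1]{(X \sm \{x_1, \dotsc, x_{m+1}\})}$. The key observation is that $\tilde{x}$ has all of its last $m$ coordinates equal to $0$, whereas each corner $c_{d+1,d+m}, \dotsc, c_{d+m,d+m}$ in $Y$ has a single entry equal to $k-1$ among exactly those coordinates; hence $\tilde{x}$ is not one of the corners, and deleting it from $Y$ removes precisely the point $(x_{m+1}, 0, \dotsc, 0)$ from the $X$-part. Therefore $Y \sm \{\tilde{x}\} = \uup[m]{(X \sm \{x_1, \dotsc, x_{m+1}\})}$, and a final application of $\uup{}$ gives $\uup{(Y \sm \{\tilde{x}\})} = \uup[m+1]{(X \sm \{x_1, \dotsc, x_{m+1}\})}$, completing the induction.

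Since the corollary sits directly on top of Proposition~\ref{prop:movepoint}, there is no genuine conceptual obstacle; the only thing requiring care is the bookkeeping in the inductive step. Specifically, one must check both that the point to be moved actually lies in the current hole $Y$ (so that Proposition~\ref{prop:movepoint} applies) and that it is distinct from the corners produced by the earlier $\uup{}$ operations (so that its deletion yields exactly the intended set). Both are guaranteed by the fact that appending zero coordinates keeps $\tilde{x}$ out of the new coordinate directions in which the corners $c_{d+1,d+m}, \dotsc, c_{d+m,d+m}$ live.
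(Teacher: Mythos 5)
Your proof is correct and is exactly the argument the paper intends: the corollary is stated with a \qed precisely because it follows by iterating Proposition~\ref{prop:movepoint}, which is what your induction on $m$ does. The bookkeeping you verify (that the lifted point $(x_{m+1},0,\dotsc,0)$ lies in the hole and is distinct from the accumulated corners, so that $\uup{(Y\sm\{\tilde{x}\})}=\uup[m+1]{(X\sm\{x_1,\dotsc,x_{m+1}\})}$) is the only content, and you handle it correctly.
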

        
        We are now ready to prove Lemma \ref{lem:removedcorners}.

        \begin{proof}[Proof of Lemma \ref{lem:removedcorners}]
            Write $|S| = m$ and $r = d-m$. By symmetry, we can assume
            that
            $$
                S =
                    \left\{ c_{j,d} : j = r+1, \dotsc, d \right\}.
            $$

            Our aim is to prove that $S$ is a hole in $\Zk{k}^d$. Note
            that $S = \uup[m]{\es}$, where the empty set $\es$ is
            considered as a subset of $\Zk{k}^r$. Therefore by
            Corollary~\ref{cor:exchangeall-simple} it suffices to find
            a hole $X \subset \Zk{k}^r$ with $|X| = m$.

            This can be done by partitioning $\Zk{k}^r$ into a singleton
            $\{x\}$ and copies of $T$ (this can be done by
            Proposition~\ref{prop:coverbutone}),
            and letting $X$ be the union of $\{x\}$ and the appropriate
            number of copies of $T$.  By assumption, $m \equiv 1\pmod{k-1}$
            so the only potential problem with this construction of $X$ is
            if $|\Zk{k}^r| < m$. However, this is ruled out by the assumption
            that $m \le d - \log_k{d}$.
        \end{proof}

    \subsection{Using one special direction to get $T$-tilable slices}
    \label{subsec:specialdimension-simple}

        The purpose of this section is to demonstrate that tiles in the first
        direction in $\Z \times \Zk{k}^{d-1}$ (that is, translates of
        $T \times \{0\}^{d-1}$)
        can be combined in such a way that the uncovered part of each slice
        can be tiled by copies of $T$ using Lemma~\ref{lem:removedcorners}.
        The exact claim is as follows.

        \begin{lem} \label{lem:specialdimension}
            There exists a number $\ell \ge 1$ such that for any $d \ge 1$ and
            any set
            $C \subset \Zk{k}^{d-1}$ of
            order $|C| \ge \ell$ there is a set $X \subset \Z \times C$,
            satisfying:
            \begin{enumerate}[\hspace{\enumskip}(a)]
                \item $X$ is a union of disjoint sets of the form
                    $(T + n) \times \{c\}$ with $n \in \Z$ and $c
                    \in C$;

                \item $|(\{n\} \times C) \cap X| \equiv 1%
                    \pmod{k-1}$ for every $n \in \Z$;
                \item $|(\{n\} \times C) \cap X| \leq \ell$ for every $n \in
                    \Z$.
            \end{enumerate}
        \end{lem}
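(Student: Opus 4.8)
The plan is to first ignore the partition of $X$ into columns and instead look for the right \emph{coverage pattern} in the special direction: a family of translates $T+s$ ($s\in\Z$), the translate $T+s$ taken with some multiplicity $U_s\in\Z_{\ge 0}$, so that every point of $\Z$ is covered a number of times congruent to $1$ modulo $k-1$. Provided the multiplicities $U_s$ are uniformly bounded, such a pattern can then be distributed among finitely many columns of $C$: the several copies covering a given height are placed in distinct elements of $C$, and within any one column we only ever place copies whose supporting intervals $[s+1,s+k]$ are pairwise disjoint. This is an interval--colouring problem; at most $k$ values of $s$ give an interval containing a fixed point, and each multiplicity is at most $k-2$, so a bounded number $\ell$ of columns suffices, and any surplus elements of $C$ are simply left unused. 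Condition (a) then holds by construction, while condition (b) holds because each column meets a given slice $\{n\}\times C$ in at most one point (the copies in a column have disjoint supports), so $|(\{n\}\times C)\cap X|$ equals the total coverage of $n$, which is $\equiv 1\pmod{k-1}$ by design.

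It remains to produce a bounded coverage pattern. The coverage of a point $n$ is $\sum_{t\in T}U_{n-t}=\sum_{t\in[k]\sm\{i\}}U_{n-t}$, so I read the requirement $\sum_{t\in T}U_{n-t}\equiv 1\pmod{k-1}$ as a bi-infinite linear recurrence for the sequence $(U_n)_{n\in\Z}$ over $\Z/(k-1)\Z$. Every term of this sum occurs with coefficient $1$; in particular the extreme terms $U_{n-1}$ and $U_{n-k}$ have coefficient $1$, a unit modulo $k-1$. Hence, given the values of $U$ on any block of $k-1$ consecutive integers, the congruence determines the next value uniquely both upwards and downwards. Fixing $U$ arbitrarily on one such block and extending in both directions therefore yields a bi-infinite sequence in $\Z/(k-1)\Z$ satisfying the congruence at every point. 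Lifting each $U_n$ to its representative in $\{0,\dotsc,k-2\}$ preserves the congruence and makes $(U_n)$ nonnegative and bounded, which is exactly what the colouring step above needs.

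With this nonnegative lift the coverage $\sum_{t\in T}U_{n-t}$ is a nonnegative integer congruent to $1$ modulo $k-1$, hence at least $1$: every height is covered, so no slice count vanishes, and it is precisely this that lets condition (b) be met with residue $1$ rather than $0$. The point I expect to be the real obstacle is recognising that an \emph{aperiodic} pattern is unavoidable here. Any periodic configuration makes the coverage a cyclic convolution of $U$ with the polynomial $p(x)=\sum_{t\in T}x^t$, and since $p(1)=k-1\equiv 0\pmod{k-1}$ one can check that no periodic choice of $U$ makes the coverage identically $1$ modulo $k-1$; already for $T=\mathtt{XX.XX}$ (that is, $k=5$, $i=3$) every periodic attempt fails by a fixed amount. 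What rescues the construction is not a clever period but the two-sided invertibility of the recurrence above, which produces a genuinely bi-infinite pattern; once that is in hand, the reduction to disjoint copies in finitely many columns is routine.
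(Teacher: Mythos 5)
Your core construction is correct and is essentially the paper's own proof: your bounded multiplicity sequence $(U_n)$, obtained by prescribing values on a block of $k-1$ consecutive integers and extending both ways via the congruence, is exactly the paper's deconvolution function $f$ (Proposition~\ref{prop:deconvolution-simple}), and your interval-colouring distribution of the copies among finitely many columns is the paper's greedy choice of sets $S_n \subset C$ with $|S_n| = f(n)$, pairwise disjoint whenever the corresponding translates of $T$ meet (the paper takes $\ell = 2k(k-2)$).

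However, the claim in your final paragraph --- that recognising the unavoidability of an \emph{aperiodic} pattern is the key obstacle --- is false, and in fact exactly backwards. The forward step of your recurrence is a map on windows $(U_m, \dotsc, U_{m+k-2}) \in (\Z/(k-1)\Z)^{k-1}$, and since the bottom coefficient (that of $U_{x-k}$) is also a unit, this map is invertible: it is a bijection of a finite set. Consequently \emph{every} bi-infinite solution of the congruence, including the one your own construction produces, is periodic. Your cyclic-convolution argument via $p(1) \equiv 0 \pmod{k-1}$ only shows that the period of any solution must be divisible by $k-1$; it does not rule out such periods. Concretely, for $T = \mathtt{XX.XX}$ (so $k=5$, $i=3$, $k-1=4$), take $U_n$ to depend only on $n \bmod 8$, with values $2,1,1,2,0,3,3,0$ at $n \equiv 0,1,\dotsc,7 \pmod 8$; one checks directly that $\sum_{t \in T} U_{n-t} \equiv 1 \pmod 4$ for every $n$, contradicting your assertion that for this tile every periodic attempt fails. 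None of this affects the validity of your proof, since the construction nowhere uses aperiodicity; the remark should simply be deleted.
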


        \begin{figure}[H]
            \centering
            \includegraphics{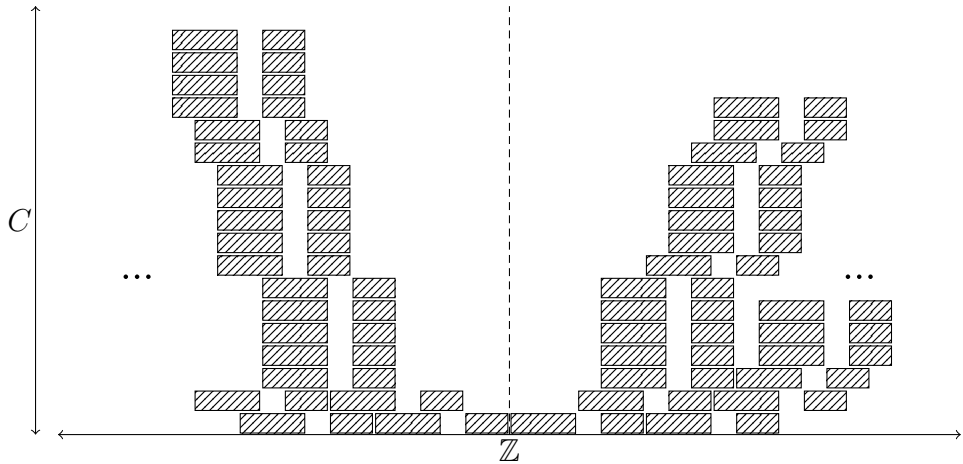}
            \caption{
                A possible construction of $X$. In this
                example the aim is to have $1$ modulo $6$ elements covered 
                in each column.
            } 
            \label{fig:specialdimensionin2d}
        \end{figure}

        We start with the following trivial proposition.

        \begin{prop}
        \label{prop:deconvolution-simple}
            There is a function $f : \Z \to \{0, \dots, k-2\}$ such
            that for each $x \in \Z$
            $$
                \sum_{y \in T} f(x - y) \equiv 1 \pmod{k-1}.
            $$
        \end{prop}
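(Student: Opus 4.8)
The plan is to read the displayed identity as a convolution equation over $\Z_{k-1}$ and to solve it by a two-sided linear recurrence. Writing $g(x)$ for the residue of $f(x)$ modulo $k-1$, the requirement becomes $\sum_{y \in T} g(x-y) \equiv 1 \pmod{k-1}$ for every $x \in \Z$. Since $\{0, \dots, k-2\}$ is a complete set of residues modulo $k-1$, any solution $g : \Z \to \Z_{k-1}$ lifts to an admissible $f$ simply by taking representatives, so it suffices to produce such a $g$.

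The key observation is that the hypothesis $i \in \{2, \dots, k-1\}$ forces both $1 \in T$ and $k \in T$, since $i \neq 1$ and $i \neq k$. Because $1 \in T$, I would isolate the term $y = 1$ and rewrite the equation indexed by $x$ as $g(x-1) \equiv 1 - \sum_{y \in T,\, y \ge 2} g(x-y)$; the right-hand side involves $g$ only at the indices $x-2, \dots, x-k$, all strictly below $x-1$. This is a linear recurrence with unit leading coefficient, so I can first fix $g$ arbitrarily on a window of $k-1$ consecutive integers and then extend it uniquely to all larger indices. Symmetrically, because $k \in T$, isolating the term $y = k$ gives $g(x-k) \equiv 1 - \sum_{y \in T,\, y < k} g(x-y)$, expressing $g(x-k)$ through values at the strictly larger indices $x-(k-1), \dots, x-1$, which extends $g$ to all smaller indices. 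Both recurrences are grounded in the same initial window and reference only previously determined values, so $g$ is a well-defined function on all of $\Z$.

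It then remains to check that every instance of the equation holds, which I would set up as a bijection between equations and values. Taking the window to be $\{0, \dots, k-2\}$, the equation indexed by $x$ is used, for $x \ge k$, exactly once to define $g(x-1)$, and for $x \le k-1$ exactly once to define $g(x-k)$. These two ranges of $x$ partition $\Z$, while the corresponding defined positions, together with the initial window, also partition $\Z$. Since each value is chosen precisely so that the single equation defining it holds, all equations are satisfied simultaneously.

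There is essentially no hard step here, which is why the proposition is stated as trivial. The only point requiring care is the bookkeeping in the last paragraph: one must confirm that running the recurrence in both directions from a finite window covers every equation exactly once, with no gaps or clashes, and this is exactly what is guaranteed by $1$ and $k$ both lying in $T$ (equivalently, by $T$ being an interval with an interior point deleted). Were $i$ equal to $1$ or $k$, the tile $T$ would be a genuine interval and the deconvolution would lack a unit coefficient at one end of the window, but the hypothesis $i \in \{2, \dots, k-1\}$ rules this out.
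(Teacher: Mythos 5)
Your proof is correct and takes essentially the same route as the paper's: fix $f$ on a window of $k-1$ consecutive integers, then extend upward via the unit coefficient at $y=1 \in T$ and downward via the one at $y=k \in T$, with each congruence used exactly once to determine one new value. The paper merely sets the initial window to zero and leaves the equation--value bookkeeping implicit, so your final paragraph just spells out what the paper treats as obvious.
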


        \begin{proof}
            Start by defining $f(n) = 0$ for $-k+1 \le n \le -1$. Now
            define $f(n)$ for $n \ge 0$ as follows. Suppose that for
            some $n \ge 0$ the values of $f(j)$ are already defined for all
            $j$ such that $-k+1 \le j \le n-1$. Then the value of $f(n)$ is
            uniquely defined by
            $$
                f(n) \hspace{5pt} \equiv \hspace{5pt}
                    1 \hspace{5pt} - \sum_{y \in T \sm \{1\}} f(n+1 - y)
                    \hspace{5pt} (\text{mod }k-1).
            $$

            Define $f(n)$ for all $n \le -k$ in a similar way.
        \end{proof}

            Now Lemma~\ref{lem:specialdimension} can be proved quickly.

        \begin{proof}[Proof of Lemma~\ref{lem:specialdimension}]
            
            Write $\ell = 2k(k-2)$ and suppose that $|C| = \ell$. 
            Let $f : \Z \to \{0, \dots, k-2\}$ be as given
            by Proposition~\ref{prop:deconvolution-simple}.
            The aim is to choose subsets $S_n \subset C$ for
            every $n \in \Z$, with orders satisfying $|S_n| = f(n)$,
            and such that $S_m \cap S_n = \es$ whenever $m \neq n$ and
            $(T+m)\cap(T+n) \neq \es$.  Then $X$ can be taken to be
            $\bigcup_{n\in\Z} (T+n) \times S_n$.

            Fix any enumeration of $\Z$, and define the sets $S_n$ one by one in
            that order. When defining $S_n$, there can be at most $2k - 1$
            choices of $m$ with $S_m$ already defined and $m-n \in T-T$.
            Moreover, $|S_m| \le k-2$ for each $m$. Therefore to be able to find
            $S_n$ it is enough to have $|C| - (2k - 1)(k-2) \ge f(n)$. Finally,
            this condition is ensured by the choice of $\ell$, completing the
            proof in the case when $|C| = \ell$. If $|C| > \ell$, we are done by
            restricting to a subset of $C$ of size exactly $\ell$.
        \end{proof}

    \subsection{Completing the proof of Theorem~\ref{thm:simplecase}}
    \label{subsec:formalities-simple}

        It was noted in Section~\ref{subsec:overview} that Lemmas~%
        \ref{lem:removedcorners} and \ref{lem:specialdimension} together
        imply that for some $d \ge 1$
        \begin{align}
            &T \text{ tiles } \Z\times\Zk{k}^{d-1},
                \label{eq:zzkdtilable}\\
            \shortintertext{and therefore}
            &T \text{ tiles } \Z^d,
                \label{eq:zdtilable}
        \end{align}
        implying Theorem~\ref{thm:simplecase}. However, some abuse of
        notation is already present in the statement of
        (\ref{eq:zzkdtilable}). In this section we will carefully
        explain what is meant by (\ref{eq:zzkdtilable}), why it follows
        from the two lemmas and how it implies (\ref{eq:zdtilable}).
        In doing so, we will complete the proof of Theorem~%
        \ref{thm:simplecase}.

        To avoid confusion, within this section we use quite precise
        language. Although this might seem pedantic here, for later it will be
        very important to have precise notation available. We denote the
        elements of $\Zk{k}$ by $\inZk{x}$ for
        $x \in \Z$ (instead of identifying them with $x$, which was our
        preferred notation in the rest of the section), and we will denote
        the image of $T$ under the natural projection $\pi : \Z \to \Zk{k}$ by
        $\pi(T)$ rather than simply by $T$.

        \simpletheorem*

        \begin{proof}
            Fix a large $d$ (more precisely, first let $\ell$ be as
            given by Lemma~\ref{lem:specialdimension} and then fix $d$
            such that $d-1 - \log_k (d-1) \ge \ell$).

            Denote the projection map $\Z\to\Zk{k}$ by $\pi$, and
            consider the following subsets of $\Z \times \Zk{k}^{d-1}$:

            \newcommand{\zz}{\,\left\{\,\inZk{0}\,\right\}\,}
            \begin{alignat*}{5}
                \mathsf{T}_1 &=
                    \hspace{6pt} T && \times \,\zz && \times \zz
                    && \times \dotsb && \times \zz, \\
                \mathsf{T}_2 &=
                \big\{\,0\,\big\} && \times \pi(T) && \times \zz 
                    && \times \dotsb && \times \zz, \\
                &\hspace{6pt}\vdots && && && && \\
                \mathsf{T}_d &=
                \big\{\,0\,\big\} && \times \,\zz && \times \zz
                    && \times \dotsb && \times \pi(T).
            \end{alignat*}

            Recall from Section~\ref{subsec:removedcorners-simple} the
            definition of
            $$
                C_{d-1} =
                    \left\{%
                        (\,
                            \underbrace{%
                                \inZk{0}\,,\,\dotsc\,,\,\inZk{0}\,,
                                \overset{%
                                    \overset{%
                                        \mathclap{j\text{-th coordinate}}
                                    }{%
                                        \downarrow
                                    }
                                }{%
                                    \,\inZk{k-1}\,
                                },
                                \,\inZk{0}\,,\,\dotsc\,,\,\inZk{0}
                            }_{d-1 \text{ coordinates}}
                        \,) : j = 1,\dotsc,d-1
                    \right\}
                \subset \Zk{k}^{d-1}.
            $$
            By Lemma~\ref{lem:specialdimension}, there
            is a set $X \subset \Z \times C_{d-1}$, which is a union
            of disjoint translates of $\mathsf{T}_1$ and for
            each $n \in \Z$ satisfies
            $|(\{n\}  \times C_{d-1}) \cap X| \le d-1 - \log_k (d-1)$ and
            $|(\{n\}  \times C_{d-1}) \cap X| \equiv 1\pmod{k-1}$. Hence,
            by Lemma~\ref{lem:removedcorners}, $(\{n\}\times\Zk{k}^{d-1})
            \sm X$ is a union of disjoint translates of
            $\mathsf{T}_2, \dotsc, \mathsf{T}_d$ for each
            $n \in \Z$.
            Therefore $\Z \times \Zk{k}^{d-1}$ is a union of
            disjoint translates of $\mathsf{T}_1, \dotsc,
            \mathsf{T}_d$ (this is exactly what is meant by
            (\ref{eq:zzkdtilable})).

            More explicitly, there are integers $1 \le \idbyA{t}
            \le d$ and $\idbyA{x_1}, \dotsc, \idbyA{x_d} \in \Z$,
            indexed by $\alpha\in A$, such that $\Z\times\Zk{k}^{d-1}$
            is the disjoint union
            $$
                \Z\times\Zk{k}^{d-1} = \bigsqcup_{\alpha\in A}
                    \Big[
                        \mathsf{T}_{\idbyA{t}}+
                        \big(
                            \,\idbyA{x_1},
                            \,\inZk{\idbyA{x_2}}\,,
                            \,\dotsc\,,
                            \,\inZk{\idbyA{x_d}}\,
                        \big)
                    \Big].
            $$

            From this it follows that, in fact, $\Z^d$ is $T$-tilable.
            Indeed, consider the following subsets of $\Z^d$:
            \begin{alignat*}{5}
                \mathsf{T}_1' &=
                    \hspace{4pt}T && \times \{0\} && \times \{0\}
                    && \times \dotsb && \times \{0\}, \\
                \mathsf{T}_2' &=
                    \{0\} && \times \hspace{4pt}T && \times \{0\}
                    && \times \dotsb && \times \{0\}, \\
                &\hspace{6pt}\vdots && && && && \\
                \mathsf{T}_d' &=
                    \{0\} && \times \{0\} && \times \{0\}
                    && \times \dotsb && \times \hspace{4pt}T.
            \end{alignat*}
            Then we can express $\Z^d$ as the disjoint union
            \[ 
                \Z^d =
                    \bigsqcup_{
                        \substack{
                            \alpha\in A \\
                            \mathclap{c_2,\dotsc,c_d\in\Z}
                        }
                    }
                \Big[
                    \mathsf{T}_{\idbyA{t}}'+
                        \big(
                            \idbyA{x_1},
                            \,\idbyA{x_2}+kc_2\,,
                            \,\dotsc\,,
                            \,\idbyA{x_d}+kc_d\,
                        \big)
                \Big]. \qedhere
            \]
        \end{proof}

\section{The general case}
\label{sec:generalcase}

    Recall the statement of the main theorem.
    
    \main*

    In this section we prove the main theorem by generalising
    the approach demonstrated in Section~\ref{sec:simplecase}.
    We have to account for two ways in which Theorem~%
    \ref{thm:simplecase} is a special case: firstly, the tile
    can be multidimensional; secondly, even in the one-dimensional
    case the tile can have more complicated structure than in
    Section~\ref{sec:simplecase}.

    It turns out that dealing with the first issue does
    not add significant extra difficulty to the proof,
    provided that the right setting is chosen. Namely,
    most of the intermediate results will be stated in
    terms of abelian groups rather than integer lattices.
    This way a multidimensional tile $T \subset \Z^b$ can be
    considered as being one-dimensional, if $\Z^b$ (rather than $\Z$)
    is chosen as the underlying abelian group. Moreover, this
    point of view is vital for comparing periodic tilings
    of an integer lattice with tilings of a discrete torus,
    already an important idea in the proof of the special case.

    On the other hand, dealing with the second issue requires
    significant effort. It involves finding the right way to
    generalise the two key ideas from Section~\ref{sec:simplecase},
    as well as introducing a new ingredient that allows the
    argument to be applied iteratively.

    We now introduce some definitions.
    Given an abelian group $G$, we call any non-empty
    subset $T \subset G$ a \emph{tile} in $G$. Given
    abelian groups $G_1,\dotsc,G_d$ and corresponding
    tiles $T_i\subset G_i$, consider the following subsets
    of $G_1 \times \dotsb \times G_d$:
    \begin{alignat*}{4}
        \mathsf{T}_1 &= \hspace{3pt} T_1 &&\times \{0\}
            &&\times \dotsb &&\times \{0\}, \\
        \mathsf{T}_2 &= \{0\} &&\times \hspace{3pt} T_2
            &&\times \dotsb &&\times \{0\}, \\
        &\hspace{6pt}\vdots && && && \\
        \mathsf{T}_d &= \{0\} &&\times \{0\}
            &&\times \dotsb &&\times \hspace{3pt} T_d.
    \end{alignat*}
    Any translate of such $\mathsf{T}_i$ (that
    is, a set of the form $\mathsf{T}_i + x$ for
    $x \in G_1\times\dotsb\times G_d$) is called a
    \emph{copy of $T_i$}. We say that a subset
    $X\subset G_1\times\dotsb\times G_d$ is
    \emph{$(T_1,\dotsc,T_d)$-tilable} if $X$ is
    a disjoint union of copies of $T_1,\dotsc,T_d$.

    It will often be the case that $(G_1, T_1) = \dotsb =
    (G_d, T_d) = (G, T)$. Then we will use the term
    \emph{$T$-tilable} as a shorthand for $(T,\dotsc,T)$-%
    tilable.     

    More generally, we may consider subsets of $G_1^{d_1}
    \times \dotsb \times G_m^{d_m}$ where $G_1, \dotsc,
    G_m$ are abelian groups with tiles $T_i \subset G_i$.
    In this setting we would say that a subset is
    $(\multi[d_1]{T_1}, \dotsc, \multi[d_m]{T_m})$-tilable.
    In other words, each $\multi[d_i]{T_i}$ replaces
    $$
    \underbrace{T_i, \dotsc, T_i}_{d_i}.
    $$
    However, we suppress ``$\multi[1]{}$'' in the notation.
    So, for example, we could say that a subset of
    $G_1^7 \times G_2 \times G_3^{10}$ is $(\multi[7]{T_1},
    T_2, \multi[10]{T_3})$-tilable.

    \subsection{A summary of the proof}
    \label{subsec:summary}

        Let $T$ be a fixed finite tile in $\Z^b$. Without
        loss of generality assume that $T \subset [k]^b$
        for some $k \ge 1$. Then, writing $\pi:\Z^b\to\Zk{k}^b$
        for the projection map, $\pi(T)$ is a tile in
        $G = \Zk{k}^b$.

        In the light of the argument from Section~\ref{sec:simplecase},
        one might hope to find a positive integer $d$ and a large
        family $\mathcal{F}$ of disjoint subsets of $G^d$ with the
        property that whenever a subfamily $\mathcal{S} \subset \mathcal{F}$
        with $|\mathcal{S}| \equiv 1 \pmod{|T|}$ is chosen, the
        set $G^d \sm (\bigcup_{S\in\mathcal{S}}S)$ is $\pi(T)$-tilable.
        However, this seems to be achievable only in the case when $\pi(T)$
        is in a certain sense a `dense' subset of $G$.

        If $\pi(T)$ is sparse, we achieve a weaker aim. Namely,
        we find a certain set $X \subset G^d$ which has sufficiently nice
        structure and is a denser subset of $G^d$ than $\pi(T)$ is of $G$.
        Also, we find a large family $\mathcal{F}$ of disjoint subsets of $X$
        such that for any $\mathcal{S} \subset \mathcal{F}$ of appropriate
        size $X \sm (\bigcup_{S \in \mathcal{S}} S)$ is $\pi(T)$-tilable.
        Taking copies of $T$ in the special direction, we can now
        tile $\Z^b \times X$.

        Repeating this process, we can use copies of $T$ and $\Z^b \times X$
        to tile $\Z^p \times Y$ for an even denser subset $Y \subset
        G^l$. After finitely many iterations of this procedure we tile
        the whole of $\Z^q \times G^m$ for some possibly large
        $q$ and $m$. From this it follows that $\Z^{q + bm}$
        is $T$-tilable.

        The rest of this section is organised as follows.
        In Section~\ref{subsec:removedcorners} we show how any tile
        in a (finite) abelian group $H$ can be used to \emph{almost} tile a
        sufficiently nice denser subset of $H^d$ for some $d$. This is the most
        complicated part of the proof, but it shares a similar structure
        with the simpler argument in Section~\ref{subsec:removedcorners-%
        simple}.

        In Section~\ref{subsec:specialdimension} we show how
        one special dimension can be used to cover the gaps in every
        slice. The argument is almost identical to the one in
        Section~\ref{subsec:specialdimension-simple}.
        
        In Section~\ref{subsec:general} we observe some simple transitivity
        properties of tilings. They enable the
        iterative application of the process. The ideas in this section
        are fairly straightforward.
        
        Finally, in Section~\ref{subsec:formalproof} we compile the tools
        together and complete the proof of Theorem~\ref{thm:main}.

    \subsection{Almost tiling denser multidimensional sets}
    \label{subsec:removedcorners}
        
        Our goal is to prove the following lemma.

        \begin{restatable}{lem}{constructdenser}
        \label{lem:constructdenser}

            Let $T \subsetneq G$ be a tile in a finite abelian
            group $G$. Then there is a set $A \subset G$,
            with $T \subsetneq A$, having the following
            property. Given any $d_0 \ge 1$, there is some
            $d \ge d_0$ and a family $\mathcal{F}$ consisting of
            at least $d_0$ pairwise disjoint subsets of $A^d$
            such that
            $$
                G \times
                        \left(
                            A^d \sm
                            \bigcup_{S \in \mathcal{S}} S
                        \right)
                \subset
                    G^{d+1}
            $$
            is $T$-tilable whenever $\mathcal{S} \subset
            \mathcal{F}$ satisfies $|\mathcal{S}| \equiv 1%
            \pmod{|T|}$.

        \end{restatable}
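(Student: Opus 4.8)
The plan is to carry out the four-step strategy of Section~\ref{subsec:removedcorners-simple} in the group setting, carrying the prefactor $G$ throughout as a reservoir of room, with one essential new feature: the denser set $A$ must be a genuinely larger tile rather than all of $G$. After translating so that $0\in T$, pick any $p\in G\sm T$ (possible since $T\subsetneq G$) and set $A=T\cup\{p\}$, so that $0,p\in A$ and $|A|=|T|+1$. The point of this choice is the congruence $|A|\equiv 1\pmod{|T|}$, which plays the role that $k\equiv 1\pmod{k-1}$ played in the special case: it makes the final count close up, since removing any collection of $|\mathcal S|\equiv 1\pmod{|T|}$ single points from $A^d$ leaves $|A|^d-|\mathcal S|\equiv 0\pmod{|T|}$, so that $G\times(A^d\sm\bigcup_{S\in\mathcal S}S)$ has size divisible by $|T|$ for any finite group $G$.

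The first ingredient is a direct analogue of Proposition~\ref{prop:coverbutone}: $G\times(A^d\sm\{(p,\dotsc,p)\})$ is $T$-tilable. I would prove this by induction on $d$, peeling off the last coordinate. Over the values in $A\sm\{p\}=T$ the columns $\{x\}\times T$ are copies of $T$ (here $T\subseteq A$ is used) and tile $G\times A^{d-1}\times T$ completely; over the value $p$ one is left to tile $G\times(A^{d-1}\sm\{(p,\dotsc,p)\})$, which is handled by the induction hypothesis. This produces the seed \emph{hole} $\{(p,\dotsc,p)\}$, where I call $Y\subset A^d$ a hole if $G\times(A^d\sm Y)$ is $T$-tilable; leaving additionally some whole copies of $T$ uncovered yields holes of every size $\equiv 1\pmod{|T|}$ that fits inside $A^d$, exactly as in the proof of Lemma~\ref{lem:removedcorners}.

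The crux, and the step I expect to be the main obstacle, is generalising Proposition~\ref{prop:movepoint} and Corollary~\ref{cor:exchangeall-simple}: I want an exchange operation that passes to one further coordinate and trades a point of a hole for a fresh single-point corner, so that iterating it moves the points of a seed hole onto a large family of pairwise disjoint corners (one per added dimension). The difficulty is that the proof of Proposition~\ref{prop:movepoint} relied on Proposition~\ref{prop:coverbutone} for an \emph{arbitrary} removed point, whereas the analogue above only lets us omit the single point $(p,\dotsc,p)$. In fact arbitrary-point covering genuinely fails here: already $G\times(T\sm\{q\})$ has size $|G|(|T|-1)$, which is not divisible by $|T|$ unless $|T|\mid|G|$. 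So the simple exchange does not transfer, and overcoming this requires a more robust operation that exploits the internal structure of $A$ --- the copy of $T$ sitting inside it, and columns in the extra $G$ direction --- rather than arbitrary-point covering, all while respecting the $|A|\equiv 1\pmod{|T|}$ bookkeeping.

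Finally, granting the exchange, I would assemble the lemma as follows. Given $d_0$, choose $d$ large enough that the iterated exchange produces at least $d_0$ pairwise disjoint single-point corners and that the seed holes used fit inside $A^d$; let $\mathcal F$ be this family of corners. For any $\mathcal S\subset\mathcal F$ with $|\mathcal S|\equiv 1\pmod{|T|}$, start from a seed hole of the matching size and apply the exchange to move its points onto the corners of $\mathcal S$, exhibiting $\bigcup_{S\in\mathcal S}S$ as a hole; then $G\times(A^d\sm\bigcup_{S\in\mathcal S}S)$ is $T$-tilable, with divisibility guaranteed by the choice of $A$. A few degenerate configurations (for instance when $G$ is too small to leave any room, or when $T$ already nearly fills $G$) should be disposed of separately by elementary arguments.
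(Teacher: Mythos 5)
You have not actually given a proof: at the step you yourself identify as the crux --- the analogue of Proposition~\ref{prop:movepoint} that trades a point of a hole for a fresh corner in a newly added coordinate --- you observe (correctly) that the argument breaks down in your setting, and then continue only ``granting the exchange''. That conditional step is where the entire content of the lemma lies, and the gap is created by your choice of $A$. With $A = T \cup \{p\}$, the only subset $C \subset A$ for which $A \sm C$ is guaranteed to be a translate of $T$ is $C = \{p\}$; consequently the only removable ``corner'' of $A^d$ your setup produces is the single point $(p,\dotsc,p)$, and there is no second removable structure against which an exchange could be performed. The paper instead takes $A = T \cup \Tu$, where $\Tu = T + x$ is a translate with $T + x \neq T$, and sets $\Cu = \Tu \sm T$, $\Cd = T \sm \Tu$. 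Then \emph{both} $A \sm \Cu = T$ and $A \sm \Cd = \Tu$ are translates of $T$, so the removable corners are the $d+1$ pairwise disjoint product sets $C_{i,d}$ ($\Cu$ in one coordinate, $\Cd$ in all others), each of which can be removed tilably (Proposition~\ref{prop:removedcset}); this two-sided structure is exactly what makes the exchange step work (Proposition~\ref{prop:usecset} and Corollary~\ref{cor:mcsets}), via the operation $\uup{X} = (X \times \Cd) \cup (W \times C_{d+1,d+1})$. Your $A$ coincides with the paper's only in the special case where some translate of $T$ meets $T$ in all but one point; for a general tile it does not, and no amount of bookkeeping with the congruence $|A| \equiv 1 \pmod{|T|}$ (which, incidentally, the paper does not need: it only needs $(|T|+c)^d \equiv c^d \pmod{|T|}$ with $c = |\Cu| = |\Cd|$) can substitute for the missing exchange.

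There is a second, independent gap in your final assembly, even if an exchange were granted. In the simple case the seed hole was built as a point plus disjoint copies of $T$ (proof of Lemma~\ref{lem:removedcorners}), which relied on Proposition~\ref{prop:coverbutone} for \emph{arbitrary} points --- precisely the tool that, as you note, is unavailable in the general setting. The paper replaces it with a genuinely new construction: a partition $\mathcal{B}$ of $\Omega = G \times A^r$ into $Y_0 = G \times C_{0,r}$ and copies of $T$, built coordinate by coordinate so that each $Y_i = (T + y_i) \times C_{i,r}$ is a union of tiles of $\mathcal{B}$ and each $g \in G$ lies in $T + y_i$ for at least $(m-1)/|T|$ indices $i$. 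Here the prefactor $G$ is used structurally --- the tiles making up $Y_i$ point in the $G$-direction --- rather than merely ``carried along'' as in your sketch; it is this construction that exhibits, in every slice $\{g\} \times A^r$, a hole consisting of exactly $m$ of the corner sets, to which Corollary~\ref{cor:mcsets} is then applied. So your proposal is missing not one but two essential ingredients of the proof, and the part you do prove (the tilability of $G \times (A^d \sm \{(p,\dotsc,p)\})$), while correct, is the easy analogue of Proposition~\ref{prop:coverbutone} rather than a step toward overcoming either obstacle.
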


        Before presenting the proof, we make a few definitions
        that will hold throughout this section. First, let
        $G$ and $T$ be fixed as in the statement of Lemma~%
        \ref{lem:constructdenser}. Since $T \neq G$, we can
        fix an $x \in G$ such that $T + x \neq T$. Define
        \begin{align*}
            &\Tu = T+x,\\
            &\Cu = \Tu\sm T,\\
            &\Cd = T\sm\Tu,\\
            &A = T\cup\Tu
        \end{align*}
        (see Figures~\ref{fig:definitions}~and~\ref{fig:fourdimensions}).

        \begin{figure}[h]
            \centering
            \includegraphics[scale=0.7]{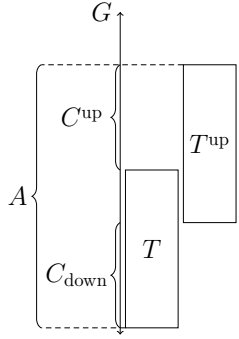}
            \caption{
                An illustration of the definitions.
            }
            \label{fig:definitions}
        \end{figure}

        \begin{figure}[h]
            \centering
            \includegraphics[scale=0.8]{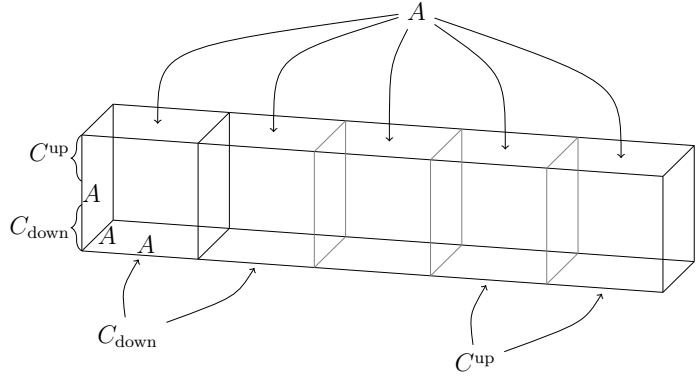}
            \caption{
                A four-dimensional diagram of $A^4$. The sets $\Cd$
                and $\Cu$ are marked on two of the axes. In this example
                $|A| = 5$ and $|\Cd| = |\Cu| = 2$.  This and the following
                four-dimensional diagrams in this section
                should be understood more generally as depicting $A^d$
                for any $d$, the three-dimensional slices representing copies
                of $A^{d-1}$.
            } 
            \label{fig:fourdimensions}
        \end{figure}

        We will use $A$ from this definition in the proof of 
        Lemma~\ref{lem:constructdenser}. For the family $\mathcal{F}$
        we will take all sets of the following form.  For any integers
        $1\le i\le d$, write
        $$
            C_{i,d}=%
                \underbrace{%
                    \Cd\times\dotsb\times\Cd\times
                    \overset{%
                        \overset{%
                            \mathclap{i\text{-th component}}
                        }{%
                            \downarrow
                        }
                    }{%
                        \Cu
                    }\times
                    \Cd\times\dotsb\times\Cd
                }_{d \text{ components}}%
            \subset A^d
        $$
        (see Figure~\ref{fig:cornersinfourdimensions}).
        Also write $C_{0,d} = (\Cd)^d$.  Note that if $i \neq j$,
        then $C_{i,d} \cap C_{j,d} = \es$.

        Finally, as $T$ is fixed, we can simply say \emph{tilable}
        instead of $T$-tilable.  

        \begin{figure}[h]
            \centering
            \includegraphics[scale=0.8]{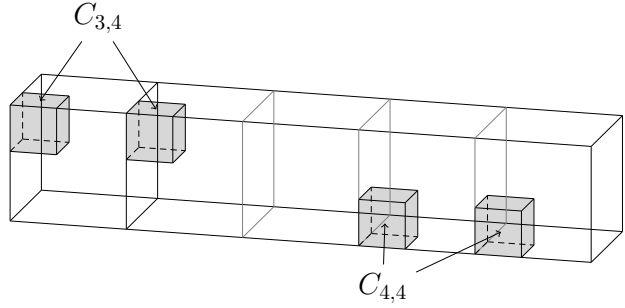}
            \caption{
                A four-dimensional diagram, which extends the previous
                diagram. Note that $C_{3,4}$ and $C_{4,4}$ both intersect 
                two three-dimensional slices, because in
                this example $|\Cd| = |\Cu| = 2$.
            } 
            \label{fig:cornersinfourdimensions}
        \end{figure}

        One of the reasons why these definitions are useful
        is that they allow the following analogue of Proposition~%
        \ref{prop:coverbutone}.

        \begin{prop} \label{prop:removedcset}
            For any integers $d \ge 1$ and $0 \le i \le d$,
            the set $A^d \sm C_{i,d}$ is tilable.
        \end{prop}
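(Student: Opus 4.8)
The plan is to prove this by induction on $d$, slicing $A^d$ along its last coordinate, in direct analogy with the proof of Proposition~\ref{prop:coverbutone}. The two facts that drive the whole argument are the one-dimensional identities $A \sm \Cu = T$ and $A \sm \Cd = \Tu$: each says that deleting one of the two corner sets from $A$ leaves exactly a single copy of $T$. These handle the slices of $A^d$ in which $C_{i,d}$ is absent.

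For the base case $d = 1$, the sets $A \sm C_{0,1} = A \sm \Cd = \Tu$ and $A \sm C_{1,1} = A \sm \Cu = T$ are both copies of $T$, hence tilable. For the inductive step ($d \ge 2$), I would first record how $C_{i,d}$ factors through the last coordinate. Writing $D = \Cd$ and $i' = i$ when $i \le d-1$, and $D = \Cu$ and $i' = 0$ when $i = d$, one checks directly that $C_{i,d} = C_{i',d-1} \times D$ with $D$ sitting in the last coordinate (here $0 \le i' \le d-1$, so the induction hypothesis applies to the index $i'$). Slicing $A^d = A^{d-1} \times A$ along the last coordinate then gives the disjoint decomposition
\[
    A^d \sm C_{i,d}
        = \Big( (A^{d-1} \sm C_{i',d-1}) \times D \Big)
        \;\sqcup\;
        \big( A^{d-1} \times (A \sm D) \big).
\]

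The first part is tilable by the induction hypothesis, applied to each slice $A^{d-1} \times \{a\}$ with $a \in D$. The second part is tilable because $A \sm D$ is a copy of $T$ (namely $\Tu$ when $D = \Cd$, and $T$ when $D = \Cu$), so $A^{d-1} \times (A \sm D)$ splits into the sets $\{p\} \times (A \sm D)$ over $p \in A^{d-1}$, each of which is a copy of $T$ running in the last direction. Verifying that the displayed decomposition is correct and disjoint is then routine bookkeeping.

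The only real subtlety lies in the second part, and it is worth flagging because it is easy to get wrong: one must \emph{not} try to tile each individual ``full'' slice $A^{d-1} \times \{a\}$ with $a \notin D$ on its own, since $A$ itself is not a copy of $T$ and such a slice need not be tilable. Instead the full slices must be gathered together into the block $A^{d-1} \times (A \sm D)$ and tiled by columns along the slicing direction, using that $A \sm D$ — rather than all of $A$ — is a single copy of $T$. This is exactly where the identities $A \sm \Cu = T$ and $A \sm \Cd = \Tu$ are used, and it is the heart of the argument.
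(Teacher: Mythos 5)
Your proof is correct, and it is built from the same ingredients as the paper's: induction on $d$, slicing along the last coordinate, the identities $A = T \sqcup \Cu = \Tu \sqcup \Cd$, and columns of copies of $T$ in the slicing direction. However, the induction step is organised differently in two respects. First, the paper opens with ``without loss of generality $i \neq d$'' (justified by permuting coordinates, under which tilability is invariant), so that $C_{i,d} = C_{i,d-1} \times \Cd$ holds uniformly; you instead handle $i = d$ head-on via the identity $C_{d,d} = C_{0,d-1} \times \Cu$, which is what your $(D, i')$ bookkeeping encodes. Second, the two decompositions are complementary: the paper applies the induction hypothesis in \emph{every} slice $(A^{d-1} \sm C_{i,d-1}) \times \{g\}$, $g \in A$, and then tiles the thin leftover block $C_{i,d-1} \times (A \sm \Cd) = C_{i,d-1} \times \Tu$ by columns, whereas you apply the induction hypothesis only in the $|D|$ slices over $D$ and tile the fat block $A^{d-1} \times (A \sm D)$ by columns. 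Both partitions are valid and equally short; yours avoids the appeal to coordinate-permutation symmetry and leans more heavily on columns, while the paper's WLOG keeps the case distinction out of the formulas entirely. Your closing caution --- that one must not attempt to tile a full slice $A^{d-1} \times \{a\}$ on its own, since $A$ is not a copy of $T$ --- is well taken and is respected by the paper too: its leftover region $C_{i,d-1} \times \Tu$ is likewise tiled by columns in the $d$-th direction rather than slice by slice.
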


        \begin{proof}[Proof (see Figure~\ref{fig:removedcset})]
            Use induction on $d$.
            If $d=1$, observe that $A = \Cu \sqcup T
            = \Cd \sqcup \Tu$, and so $A\sm C_{i,1}$ ($=
            A\sm\Cu \text{ or } A\sm\Cd$) is a translate
            of $T$.

            Now suppose that $d\ge 2$ and without loss
            of generality assume that $i \neq d$. By the
            induction hypothesis, for each
            $g \in A$, the slice $(A^{d-1}\sm C_{i,d-1})
            \times \{g\}$ can be $T$-tiled. It remains
            to tile the set $C_{i,d-1}\times(A\sm \Cd)=
            C_{i,d-1}\times\Tu$,
            but this is obviously a union of disjoint
            copies of $T$.
        \end{proof}

        \begin{figure}[h]
            \centering
            \includegraphics[scale=0.8]{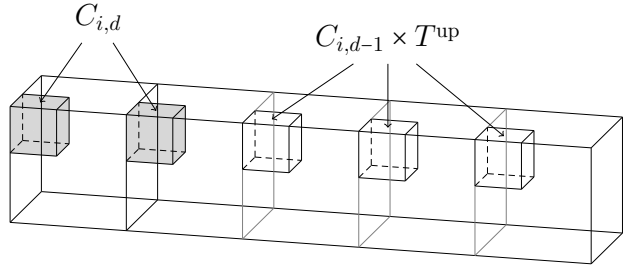}
            \caption{
                The induction step in the
                proof of Proposition~\ref{prop:removedcset}. The
                set $C_{i,d-1} \times \Tu$ is a union of copies of
                $\Tu$. In each slice it remains to tile a copy of
                $A^{d-1} \sm C_{i,d-1}$. This can be done by the
                induction hypothesis.
            } 
            \label{fig:removedcset}
        \end{figure}

        We now make a series of definitions that
        are useful for lifting subsets of lower-dimensional
        spaces to higher-dimensional spaces.

        A \emph{basic set} is a set of the form $A^d,\,
        G \times A^d$ or $\{g\} \times A^d$ for some $g \in G$, with $d$ any
        positive integer. Let $X$ be a subset of a basic set $\Omega$
        and write $\Omega = W \times A^d$ (so $W = A^0,\,G$ or $\{g\}$
        for some $g \in G$). We define 
        $$
        \uup{X} = (X \times \Cd) \cup (W \times C_{d+1,d+1})
                    \subset W \times A^{d+1}
        $$
        (see Figure~\ref{fig:dagger-general}).

        \begin{figure}[h]
            \centering
            \includegraphics{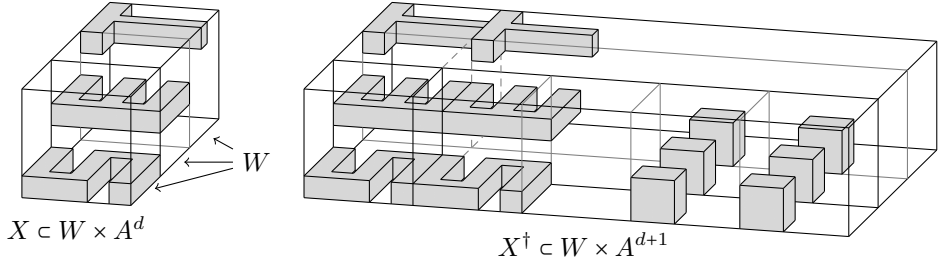}
            \caption{
                An illustration of the definition of $\uup{X}$, building
                on Figure~\ref{fig:fourdimensions}. The
                diagram on the left is four-dimensional and represents
                a generic set $X \subset W \times A^d$. The diagram
                on the right is five-dimensional and represents the
                corresponding $\uup{X}$.
                We stress that this is an abstract illustration. In
                particular, here  $|A| = 5$ and $|W| = 3$, while in fact
                we always have either $|W| = 1$ or $|W| = |G| \ge |A|$.
            } 
            \label{fig:dagger-general}
        \end{figure}

        Moreover, for
        any $m \ge 1$ we use the shorthand $\uup[m]{X}$ to denote the
        result of $m$ consecutive applications of the $\uup{}$
        operation to $X$, that is,
        \begin{align*}
            \uup[m]{X} &= X{\underbrace{^{\dagger\dotsc\dagger}}_{m}} \\
                       &= \big(
                           X \times C_{0,m}
                          \big) 
                            \cup
                          \left(
                             W \times C_{d+1,d+m} 
                          \right) 
                            \cup
                          \dotsb
                            \cup
                          \left(
                             W \times C_{d+m,d+m}
                          \right) \\
                       &\subset W \times A^{d+m}.
        \end{align*}

        For the final definition, we say that $X$ is a
        \emph{hole} in $\Omega$ if $\Omega \sm X$ is tilable.  
        Note that these definitions depend not only on $X$, but
        also on the underlying basic set $\Omega$. Therefore we will
        only use them when the underlying set is explicitly stated
        or clear from the context.

        \begin{prop} \label{prop:usecset}
            Let $d \ge 1$ and let $X$ be a hole in $A^d$.
            Suppose that $C_{i,d} \subset X$
            for some $0 \le i \le d$. Then $\uup{(X \sm C_{i,d})}$
            is a hole in $A^{d+1}$.
        \end{prop}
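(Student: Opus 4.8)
The plan is to follow the template of Proposition~\ref{prop:movepoint}, slicing $A^{d+1} = A^d \times A$ according to the value of the last coordinate. Writing $Y = X \sm C_{i,d}$, the set to be tiled is $A^{d+1} \sm \uup{Y}$, where $\uup{Y} = (Y \times \Cd) \cup \big((\Cd)^d \times \Cu\big)$. The last coordinate ranges over $A$, which decomposes as the disjoint union $A = \Cd \sqcup (T \cap \Tu) \sqcup \Cu$; equivalently, $T = A \sm \Cu$ and $\Tu = A \sm \Cd$. First I would record what $\uup{Y}$ looks like over each of the three regions: over $\Cd$ it equals $Y$, over $T \cap \Tu$ it is empty, and over $\Cu$ it equals $C_{0,d} = (\Cd)^d$. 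Thus the slices to be tiled are $(A^d \sm Y) \times \{g\}$ for $g \in \Cd$, then $A^d \times \{g\}$ for $g \in T \cap \Tu$, and finally $(A^d \sm C_{0,d}) \times \{g\}$ for $g \in \Cu$.

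The key device is a family of vertical copies of $T$: for each $p \in C_{i,d}$ place the copy $\{p\} \times T$. Since $T = A \sm \Cu$, this tile covers $\{p\} \times \big(\Cd \cup (T \cap \Tu)\big)$ but reaches nothing over $\Cu$; collectively the vertical tiles cover exactly $C_{i,d} \times (A \sm \Cu)$, and since $Y = X \sm C_{i,d}$ gives $C_{i,d} \cap Y = \es$, they lie inside $A^{d+1} \sm \uup{Y}$. It then remains to fill each slice horizontally. Over $g \in \Cd$ the vertical tiles already cover $C_{i,d} \times \{g\}$, and because $C_{i,d} \subset X$ we have $A^d \sm Y = (A^d \sm X) \cup C_{i,d}$, so the residual $(A^d \sm X) \times \{g\}$ is tiled by transporting the tiling witnessing that $X$ is a hole in $A^d$. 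Over $g \in T \cap \Tu$ the vertical tiles cover $C_{i,d} \times \{g\}$ and the residual $(A^d \sm C_{i,d}) \times \{g\}$ is tilable by Proposition~\ref{prop:removedcset}. Over $g \in \Cu$ the vertical tiles contribute nothing, and the whole residual $(A^d \sm C_{0,d}) \times \{g\}$ is again tilable by Proposition~\ref{prop:removedcset}, this time with index $0$.

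Finally I would verify that these pieces are pairwise disjoint and exhaust $A^{d+1} \sm \uup{Y}$: the vertical tiles sit in distinct columns and avoid $\uup{Y}$, while the horizontal tilings live in disjoint slices, each disjoint from $\uup{Y}$ by construction. The point to get right --- the analogue of the three-way slice bookkeeping in Proposition~\ref{prop:movepoint} --- is how the displaced block $C_{i,d}$ of the hole $X$ is reconciled: over $\Cd$ and over $T \cap \Tu$ it is restored by the vertical tiles, whereas over $\Cu$ it is exactly the block $C_{0,d}$ deleted by the $\uup{}$ operation that makes Proposition~\ref{prop:removedcset} applicable. I expect the only genuinely delicate step to be checking that the identities $T = A \sm \Cu$ and $\Tu = A \sm \Cd$ make the vertical tiles cover precisely the right portions of $A$; the remaining verifications are routine.
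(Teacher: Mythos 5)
Your proof is correct and is essentially the paper's own argument: the four regions you produce --- $C_{i,d}\times(A\sm\Cu)$ covered by the vertical copies of $T$, $(A^d\sm X)\times\Cd$ tiled because $X$ is a hole, and $(A^d\sm C_{i,d})\times(A\sm(\Cu\cup\Cd))$ and $(A^d\sm C_{0,d})\times\Cu$ tiled by Proposition~\ref{prop:removedcset} --- are exactly the paper's four-set partition of $A^{d+1}\sm\uup{(X \sm C_{i,d})}$. The only difference is presentational: you organise the bookkeeping slice-by-slice over the last coordinate, whereas the paper states the partition directly.
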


        \begin{proof}[Proof (see Figure~\ref{fig:usecset})]
            Partition $A^{d+1} \sm \uup{(X \sm C_{i,d})}$
            into four sets
            \begin{enumerate}[\hspace{\enumskip}(i)]
                \item $C_{i,d} \times (A \sm \Cu)$ --- tilable,
                    because $A \sm \Cu = T$;
                \item $(A^d \sm X) \times \Cd$ --- tilable, because
                    $A^d \sm X$ is tilable;
                \item $(A^d \sm C_{i,d}) \times (A \sm (\Cu \cup \Cd))$
                    --- tilable by Proposition~\ref{prop:removedcset};
                \item $(A^d \sm C_{0,d}) \times \Cu$
                    --- tilable by Proposition~\ref{prop:removedcset}.
                    \qedhere
            \end{enumerate}
        \end{proof}

        \begin{figure}[h]
            \centering
            \includegraphics{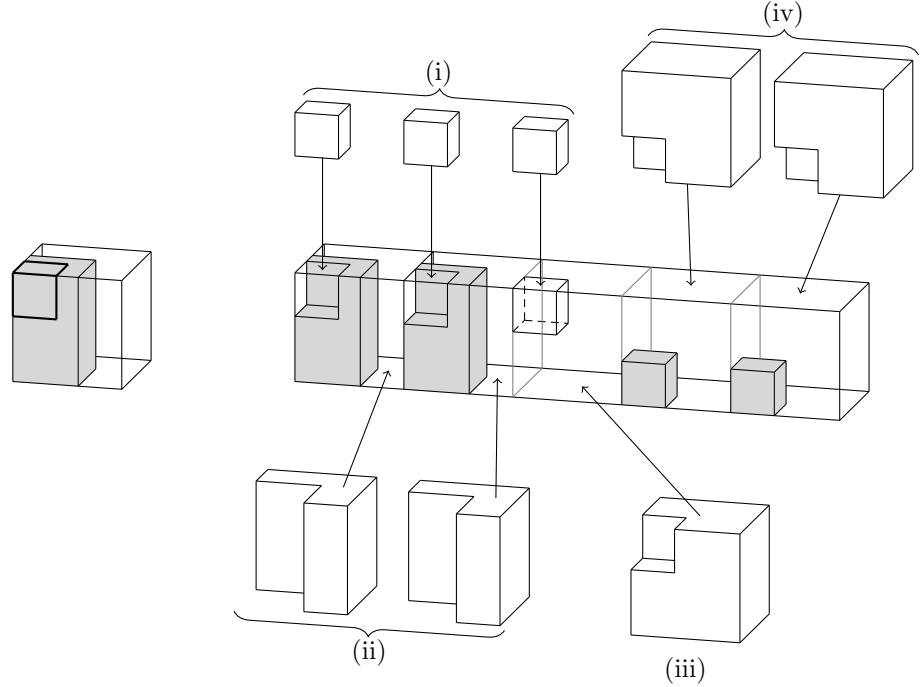}
            \caption{
                An illustration of the proof of Proposition~%
                \ref{prop:usecset}. The three-dimensional diagram on
                the left represents a hole $X \subset A^d$ which
                contains $C_{i,d}$. The four-dimensional diagram
                on the right represents $\uup{(X \sm C_{i,d})}$ and
                demonstrates why it is a hole in $A^{d+1}$.
            } 
            \label{fig:usecset}
        \end{figure}

        This proposition is the most useful for us in the form of the
        following corollary.

        \begin{cor} \label{cor:mcsets}
            Let $d \ge 1$ and suppose that $0 \le i_1, \dotsc, i_m
            \le d$ are distinct integers. Then
            $$
                \uup[m]{%
                    \big(
                        A^d \sm (C_{i_1, d} \cup \dotsb \cup C_{i_m, d})
                    \big)
                }
            $$
            is a hole in $A^{d+m}$.
        \end{cor}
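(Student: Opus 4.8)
The plan is to prove the statement by induction on $m$, feeding the output of Proposition~\ref{prop:usecset} back into itself. Writing $Y_j = A^d \sm (C_{i_1,d} \cup \dotsb \cup C_{i_j,d})$, I would show by induction on $j$ that $X\tup{j} := \uup[j]{Y_j}$ is a hole in $A^{d+j}$; the case $j = m$ is then exactly the claim. The base case $j = 0$ is immediate, since $X\tup{0} = A^d$ and $A^d \sm A^d = \es$ is tilable, so $A^d$ is a hole containing every $C_{i,d}$.

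The engine of the induction is the identity $C_{i,d} \times \Cd = C_{i,d+1}$, valid for all $0 \le i \le d$: appending one more $\Cd$-coordinate merely lengthens the run of $\Cd$'s while leaving the $\Cu$ in slot $i$ (and extends $(\Cd)^d$ to $(\Cd)^{d+1}$ when $i = 0$). Using the formula for $\uup[m]{}$ recorded above, with $W$ a single point since the ambient basic set here is $A^d$, this yields the explicit description
$$
    X\tup{j} = \big(Y_j \times (\Cd)^j\big) \cup C_{d+1,d+j} \cup \dotsb \cup C_{d+j,d+j} \subset A^{d+j}.
$$
Since $i_{j+1} \le d$ is distinct from $i_1, \dotsc, i_j$, the pairwise disjointness of the corner sets gives $C_{i_{j+1},d} \subset Y_j$, and hence $C_{i_{j+1},d+j} = C_{i_{j+1},d} \times (\Cd)^j \subset Y_j \times (\Cd)^j \subset X\tup{j}$.

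This containment lets me apply Proposition~\ref{prop:usecset} to the hole $X\tup{j}$ with index $i_{j+1}$, so $\uup{(X\tup{j} \sm C_{i_{j+1},d+j})}$ is a hole in $A^{d+j+1}$. To recognise this as $X\tup{j+1}$, I would observe that $C_{i_{j+1},d+j}$ is disjoint from each piece $C_{d+l,d+j}$ (those carry a $\Cu$ in a coordinate $> d$, where $C_{i_{j+1},d+j}$ carries a $\Cd$), so deleting it affects only the block $Y_j \times (\Cd)^j$; this gives $X\tup{j} \sm C_{i_{j+1},d+j} = \uup[j]{Y_{j+1}}$, and one further application of $\uup{}$ produces $\uup[j+1]{Y_{j+1}} = X\tup{j+1}$. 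Taking $j = m$ completes the proof. The only delicate point is this last index-tracking---confirming that each corner set survives inside the current hole after the preceding daggers, and that the set-difference lands on precisely $Y_{j+1}$---but it rests entirely on the disjointness of the $C_{i,d}$ and of $\Cu$ and $\Cd$, so I expect no genuine obstacle.
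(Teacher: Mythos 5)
Your proof is correct and is essentially the paper's own argument: both proceed by induction on the number of removed corner sets, applying Proposition~\ref{prop:usecset} once per step via the same commutation identity $\uup[j+1]{Y_{j+1}} = \uup{\big(\uup[j]{Y_j} \sm C_{i_{j+1},d+j}\big)}$ (the paper states this with $j+1 = m$, peeling off the last dagger top-down, while you build up from $j=0$). The only difference is presentational: you additionally verify the containment $C_{i_{j+1},d+j} \subset \uup[j]{Y_j}$ and the disjointness bookkeeping that the paper leaves implicit.
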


        \begin{proof}
            Use induction on $m$. The base case $m=1$ is a special
            case of Proposition~\ref{prop:usecset}, so suppose
            that $m \ge 2$. Note that
            \begin{align*} 
                &\uup[m]{%
                    \big(
                        A^d \sm
                        (C_{i_1,d} \cup \dotsb \cup C_{i_m,d})
                    \big)} \\
                =& 
                \uup{
                    \left(
                        \uup[m-1]{%
                            \big(
                                A^d \sm
                                (C_{i_1,d} \cup \dotsb \cup C_{i_{m-1},d})
                            \big)
                        }
                        \sm C_{i_m,d+m-1}
                    \right)
                }
            \end{align*}
            so it is a hole in $A^{d+m-1}$ by the induction
            hypothesis and Proposition~\ref{prop:usecset}.
        \end{proof}

        Now we have the tools needed for the proof of Lemma~%
        \ref{lem:constructdenser}.

        \begin{proof}[Proof of Lemma~\ref{lem:constructdenser}]

            Fix any $d \ge (1 + |G|/|T|)d_0$ and write
            $\mathcal{F} = \{C_{i,d} : i = 1, \dotsc, d\}$.
            By symmetry, it is enough to find a tiling for
            the set
            $$
                M_m =
                G \times
                    \left(
                        A^d \sm
                            (
                                C_{d-m+1,d} \cup \dotsb \cup C_{d,d}
                            )
                    \right)
            $$
            for every choice of $m \le d_0$ with
            $m \equiv 1\pmod{|T|}$. Fix one such value of $m$,
            and let $M = M_m$ be the corresponding set that we
            have to tile.
            
            Define $r = d - m$ and $\Omega = G \times A^r$.
            We will construct a partition
            $\mathcal{B}$ of the set $\Omega$, satisfying:
            \begin{itemize}
                \item $\mathcal{B}$ consists of the set $Y_0 =
                    G \times C_{0,r}$ and copies of the tile $T$;
                
                \item for each $1 \le i \le r$, there is
                    some $y_i \in G$ such that the set $Y_i =
                    (T+y_i)\times C_{i,r}$ is exactly the union of
                    some copies of $T$ in $\mathcal{B}$; 

                \item each $y \in G$ appears at least
                    $t= (m-1) / |T|$ times in the list
                    $y_1, \dotsc, y_r$.
            \end{itemize}

            \begin{figure}[H]
                \centering
                \includegraphics[width=0.9\textwidth]{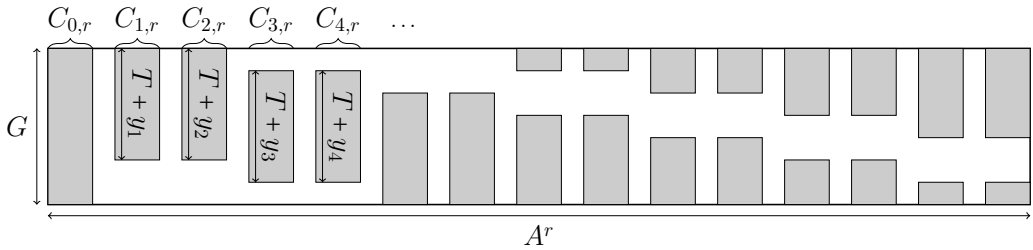}
                \caption{
                    By constructing the partition $\mathcal{B}$ we show
                    that the set $\bigcup_{i=0}^r Y_i$ (grey in this
                    diagram) is a hole in $\Omega = G \times A^r$. In fact,
                    $\bigcup_{i\in I\cup\{0\}} Y_i$ is a hole for any
                    $I \subset [r]$.
                } 
                \label{fig:blueprint}
            \end{figure}

            We start the construction by fixing any list
            $y_1,\dotsc, y_r$ such that each member of $G$ appears
            exactly $t$ times in $y_1, \dotsc, y_{t|G|}$ (in
            particular, this list satisfies the final condition displayed
            above).  Note that such a list exists since
            $r \ge t |G|$.
            
            Now we use induction to construct, for each $0 \le j \le r$,
            a partition $\mathcal{B}_j$ of $G \times A^j$ such that
            the first two conditions are satisfied when $\mathcal{B}$ and
            $r$ are replaced by $\mathcal{B}_j$ and $j$.

            Let $\mathcal{B}_0 = \{G\}$. Having defined
            $\mathcal{B}_{j-1}$, let $\mathcal{B}_j$ consist of the
            following sets (see Figure~\ref{fig:blueprintconstruction}):
            \begin{enumerate}[\hspace{\enumskip}(i)]
                \item $G\times C_{0,j}$,
                \item $X \times \{b\}$ for each $X \in
                    \mathcal{B}_{j-1}$ that is a copy of $T$
                    and each $b \in \Cd$,
                \item $\{g\} \times \{a\} \times \Tu$
                    for each $g \in G \sm (T + y_j)$ and
                    each $a \in A^{j-1}$,
                \item $(T+y_j) \times \{a\} \times \{b\}$
                    for each $a \in A^{j-1}$ and each
                    $b \in \Tu = A\sm \Cd$.
            \end{enumerate}
            One can easily check that $\mathcal{B}_j$ is a
            partition of $G \times A^j$ with the required properties. In
            particular, the sets of the first two types cover
            $G \times A^{j-1} \times \Cd$, and the remaining sets cover
            $G \times A^{j-1} \times (A \sm \Cd)$.

            This concludes the construction of $\mathcal{B}$.

            \begin{figure}[H]
                \centering
                \includegraphics[width=.9\textwidth]{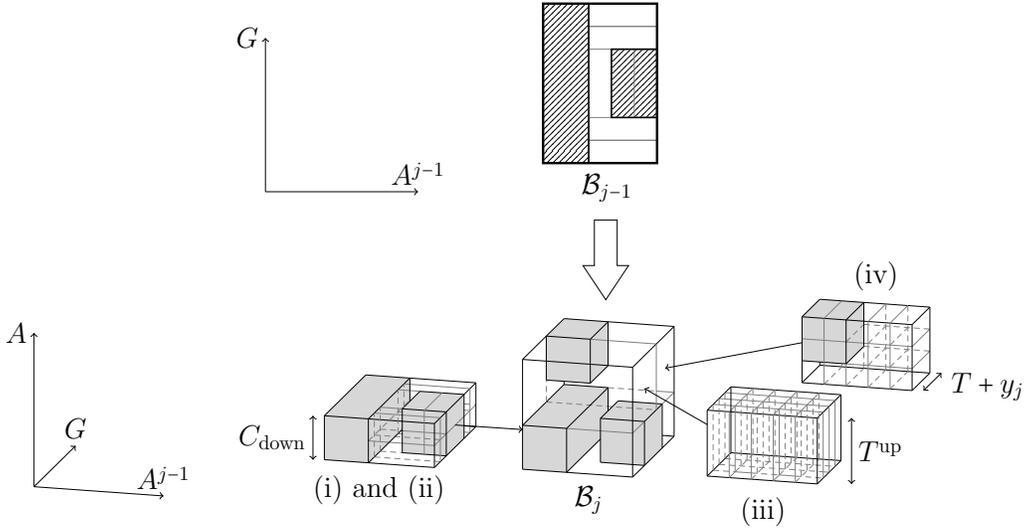}
                \caption{
                    The induction step in the
                    construction of the partition $\mathcal{B}$.
                } 
                \label{fig:blueprintconstruction}
            \end{figure}

            Define (recalling that $Y_0 = G \times C_{0,r}$
            and $Y_i = (T+y_i) \times C_{i,r}$ for $1 \le i
            \le t|G|$) 
            $$
                S =
                    \Omega \sm
                    \left(
                        \bigcup_{i = 0}^{t|G|} Y_i
                    \right).
            $$
            
            The point is that $S$ is tilable by the restriction of
            $\mathcal{B}$, and hence $S \times C_{0,m}$ is also 
            tilable. Therefore it only remains to prove that $M \sm
            \left(S \times C_{0,m} \right)$ is tilable, because
            this would imply that $M$ is tilable. Observe that $M \sm
            \left(S \times C_{0,m} \right) = (G \times A^d) \sm
            \uup[m]{S}$, so it remains to prove that $\uup[m]{S}$ is a hole.

            To prove this, fix any $g \in G$ and write $\Omega_g$ =
            $\{g\} \times A^r$. Then $\Omega_g$ intersects
            $Y_0$ and exactly $t|T| = m-1$ of the
            $Y_1, \dotsc, Y_{t|G|}$. In other words,
            $$
                \Omega_g \cap S =
                    \{g\} \times
                        \left(
                            A^r \sm
                            \bigcup_{k=1}^m C_{j_k,r}
                        \right)
            $$
            for some $0 = j_1 < j_2 < \dotsb < j_m \le r$. By
            Corollary~\ref{cor:mcsets}, $\uup[m]{(\Omega_g \cap S)}$
            is a hole in $\{g\} \times A^d$. This holds for
            any $g \in G$, so in fact $\uup[m]{S} = \bigcup_{g \in G}
            \uup[m]{(\Omega_g \cap S)}$ is a hole in
            $G \times A^d$, completing the proof.
        \end{proof}

    \subsection{Using one special dimension to cover certain subsets
        in slices}
    \label{subsec:specialdimension}

        In this section we show how one special dimension
        can be used to lay foundations for a tiling so that
        the tiling can be completed in each slice separately
        using Lemma~\ref{lem:constructdenser}. 

        Here is the main result of this section. Its statement
        and proof are very similar to Lemma~\ref{lem:specialdimension}
        from Section~\ref{sec:simplecase}.

        \begin{restatable}{lem}{coverholes}
        \label{lem:coverholes}
            Let $t, b \ge 1$ be integers and $T$ a finite
            tile in $\Z^b$. Further, let $S$ be a set and let $\mathcal{F}$
            be a family consisting of at least $(t-1)|T|^2$
            pairwise disjoint subsets of $S$. Then there is
            a set $X\subset\Z^b \times S$, satisfying:
            \begin{itemize}
                \item $X$ is a union of disjoint sets of the
                    form $(T+x)\times A$ with $x\in\Z^b$
                    and $A\in\mathcal{F}$, and
                \item for each $x\in\Z^b$ there is some $m \equiv
                    1 \pmod{t}$ such that
                    $
                        \{y \in S : (x, y) \in X\}
                    $
                    is a union of $m$ distinct members of
                    $\mathcal{F}$.
            \end{itemize}
        \end{restatable}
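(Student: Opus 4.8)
The plan is to follow the template of Lemma~\ref{lem:specialdimension} almost verbatim, replacing the one-dimensional counting modulo $k-1$ by counting \emph{members of $\mathcal{F}$} modulo $t$. The proof splits into a deconvolution step (the analogue of Proposition~\ref{prop:deconvolution-simple}) and a greedy assignment step.

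First I would establish the multidimensional deconvolution: there is a function $f \colon \Z^b \to \{0, \dotsc, t-1\}$ with
$$
    \sum_{y \in T} f(x - y) \equiv 1 \pmod{t}
    \qquad \text{for every } x \in \Z^b.
$$
To build $f$, choose a linear functional $\phi \colon \Z^b \to \Z$ that is injective on $T$ (a generic integer functional works, since $T$ is finite), and let $t_0, t_1 \in T$ be its $\phi$-maximiser and $\phi$-minimiser, with $w = \phi(t_0) - \phi(t_1) > 0$. Set $f \equiv 0$ on the band $\{z : 0 \le \phi(z) < w\}$. Then extend $f$ upward, layer by layer, using the identity that isolates the top point: for $z$ with $\phi(z) \ge w$, define $f(z)$ so that $\sum_{y \in T} f(z + t_1 - y) \equiv 1$, which expresses $f(z)$ in terms of values of $f$ at $\phi$-values in $[\phi(z) - w, \phi(z) - 1]$; symmetrically, extend $f$ downward for $\phi(z) < 0$ by using $t_0$ to isolate the bottom point. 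A short bookkeeping check shows that each recursion only refers to already-defined values, and that every equation, indexed by the tile position $x$, is enforced exactly once (the equation for $x$ is pinned down by its top point $x - t_1$ when $\phi(x) \ge \phi(t_0)$, and by its bottom point $x - t_0$ otherwise). This is the one genuinely new ingredient, and the part I expect to need the most care.

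With $f$ in hand, I would assign to each $x \in \Z^b$ a subfamily $\mathcal{F}_x \subset \mathcal{F}$ with $|\mathcal{F}_x| = f(x)$, subject to the constraint that $\mathcal{F}_{x_1} \cap \mathcal{F}_{x_2} = \es$ whenever $x_1 \neq x_2$ and $x_1 - x_2 \in T - T$, and then set
$$
    X = \bigcup_{x \in \Z^b} (T + x) \times \Big( \bigcup_{A \in \mathcal{F}_x} A \Big).
$$
The pieces $(T+x) \times A$ are pairwise disjoint: for equal $x$ the sets $A$ are distinct members of the pairwise-disjoint family $\mathcal{F}$, while for distinct $x$ either the first coordinates are disjoint or the conflict constraint forces the second coordinates to be disjoint. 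For the slice condition, fix $x'$; then $\{y : (x', y) \in X\} = \bigcup_{y \in T} \bigcup_{A \in \mathcal{F}_{x' - y}} A$, and since any two $x' - y_1$, $x' - y_2$ (with $y_1 \neq y_2 \in T$) differ by a nonzero element of $T - T$, the families $\mathcal{F}_{x'-y}$ are pairwise disjoint. Hence this is a union of exactly $\sum_{y \in T} f(x' - y) \equiv 1 \pmod{t}$ distinct members of $\mathcal{F}$, as required.

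Finally I would carry out the assignment greedily, exactly as in Lemma~\ref{lem:specialdimension}. Fixing an enumeration of $\Z^b$ and defining the $\mathcal{F}_x$ in turn, when $\mathcal{F}_x$ is treated at most $|T - T| - 1$ previously-treated columns $x'$ satisfy $x' - x \in T - T$, and each such $\mathcal{F}_{x'}$ occupies at most $t-1$ members of $\mathcal{F}$. Thus at most $(|T-T| - 1)(t-1)$ members are forbidden, and choosing the $f(x) \le t-1$ fresh members of $\mathcal{F}_x$ is possible provided $|\mathcal{F}| \ge (t-1) + (|T-T|-1)(t-1) = (t-1)\,|T-T|$. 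Since $|T - T| \le |T|^2$, the hypothesis $|\mathcal{F}| \ge (t-1)|T|^2$ is precisely enough, which completes the construction.
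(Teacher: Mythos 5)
Your proof is correct, and its overall skeleton is the paper's: a deconvolution step producing $f \colon \Z^b \to \{0,\dotsc,t-1\}$ with $\sum_{y \in T} f(x-y) \equiv 1 \pmod{t}$, followed by a greedy assignment of subfamilies $\mathcal{F}_x$ with $|\mathcal{F}_x| = f(x)$, disjoint across conflicting translates; your counting in the greedy step, $(t-1)|T-T| \le (t-1)|T|^2$, is the paper's bound almost verbatim. Where you genuinely diverge is the proof of the deconvolution itself. The paper proves a stronger statement (an arbitrary right-hand side $f(x)$ in place of the constant $1$) by induction on the dimension $b$: it slices $T$ along the last coordinate into sub-tiles $T_n = \{x \in \Z^{b-1} : (x,n) \in T\}$, sets $g = 0$ on a slab, and extends upward and downward slab by slab, where the values within each slab are obtained by applying the induction hypothesis to $T_0$ (respectively $T_k$) with a right-hand side built from already-defined values --- this is precisely why the general-$f$ version is needed: it is what makes the induction close. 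You instead linearize $\Z^b$ in one stroke via a generic functional $\phi$ injective on $T$, reducing to a one-dimensional-style recursion: a zero band of width $w$, then solving for the value at the top point $x - t_1$ going up and at the bottom point $x - t_0$ going down. Your bookkeeping is sound: each equation is enforced exactly once (by its top point when $\phi(x) \ge \phi(t_0)$, by its bottom point when $\phi(x) < \phi(t_0)$), and each recursive definition refers only to $\phi$-levels already treated, since the referenced levels lie in $[\phi(z)-w,\phi(z)-1]$ (going up) or $[\phi(z)+1,\phi(z)+w]$ (going down). Your route buys a shorter, single-pass argument that never needs the stronger statement with general right-hand side; the paper's induction buys a proof that stays inside the coordinate structure of $\Z^b$ (no generic functional to produce and verify) and yields the more general deconvolution proposition as a by-product. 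One cosmetic point: when $|T| = 1$ you have $t_0 = t_1$ and $w = 0$, so your assertion $w > 0$ fails; but then the band is empty and both recursions simply set $f \equiv 1$, so nothing breaks.
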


        We will deduce Lemma~\ref{lem:coverholes} from the following
        simple deconvolution type statement.

        \begin{prop}
            Let $t \ge 1$ and $b \ge 0$ be integers, $T$ a finite
            tile in $\Z^b$, and $f : \Z^b \to \Z$ a function. Then
            there is a function $g : \Z^b \to \{0, \dotsc, t-1\}$ such
            that for each $x \in \Z^b$
            $$
                \sum_{y \in T} g(x-y) \equiv f(x) \pmod{t}.
            $$
        \end{prop}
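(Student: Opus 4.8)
The plan is to imitate the one-dimensional construction in Proposition~\ref{prop:deconvolution-simple}, building $g$ by a two-sided recursion after linearising $\Z^b$ by a suitable functional. If $b = 0$ or $|T| = 1$ the statement is trivial (one solves for $g$ pointwise), so assume $|T| \ge 2$. First I would choose an integer linear functional $\phi : \Z^b \to \Z$ that is injective on $T$ and surjective; for instance, since $T$ is finite, one may take $\phi(v) = v_1 + N v_2 + \dotsb + N^{b-1} v_b$ for sufficiently large $N$. By injectivity, $T$ has a unique element $y^*$ minimising $\phi$ and a unique element $y^{**}$ maximising $\phi$; write $a = \phi(y^*)$, $A = \phi(y^{**})$ and $L = A - a \ge 1$. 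Each fibre $\phi^{-1}(c)$ is infinite when $b \ge 2$, but this causes no difficulty: the recursions below always relate a point to points in strictly different fibres, so values within one fibre are assigned independently.

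Next I would record two complementary recursions. Reindexing the constraint at $x$ by $z = x - y^*$, the $y^*$-term is $g(z)$, while every other term $g(z + y^* - y)$ lies in fibre $\phi(z) - (\phi(y) - a)$ with $\phi(y) - a \in \{1, \dotsc, L\}$; hence
\[
  g(z) \equiv f(z + y^*) - \sum_{y \in T \sm \{y^*\}} g(z + y^* - y) \pmod t
\]
expresses $g$ on fibre $n$ in terms of fibres $n - L, \dotsc, n-1$. Symmetrically, reindexing by $w = x - y^{**}$ gives
\[
  g(w) \equiv f(w + y^{**}) - \sum_{y \in T \sm \{y^{**}\}} g(w + y^{**} - y) \pmod t,
\]
which expresses $g$ on fibre $n$ in terms of fibres $n+1, \dotsc, n+L$.

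Finally I would assemble $g$. Set $g \equiv 0$ on the buffer band $\bigcup_{0 \le n \le L-1} \phi^{-1}(n)$. For $n \ge L$, define $g$ on $\phi^{-1}(n)$ by the first recursion; it uses only fibres in $[n-L, n-1] \subseteq [0, n-1]$, all already defined, and satisfies exactly the constraints at $x$ with $\phi(x) = n + a \ge A$. For $n \le -1$, define $g$ on $\phi^{-1}(n)$ by the second recursion; it uses only fibres in $[n+1, L-1]$, again already defined, and satisfies exactly the constraints at $x$ with $\phi(x) = n + A \le A - 1$. Choosing in each case the representative in $\{0, \dotsc, t-1\}$ puts $g$ in the required range. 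The two recursions handle the constraints with $\phi(x) \ge A$ and with $\phi(x) \le A-1$ respectively, so together they cover every $x \in \Z^b$ exactly once; and the value-sets (buffer fibres $0, \dotsc, L-1$, upward fibres $\ge L$, downward fibres $\le -1$) partition $\Z$, so every value of $g$ is assigned exactly once.

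The only real difference from the one-dimensional case is bookkeeping, and that is where the care lies: one must check that the buffer band has width exactly $L$, so that the upward and downward regions tile the $\phi$-axis with no gap or overlap, and hence that each of the infinitely many constraints is solved precisely once. The passage from $\Z$ to $\Z^b$ is otherwise harmless, precisely because each recursive step strictly decreases (respectively increases) the $\phi$-level, leaving the points of a single fibre mutually independent.
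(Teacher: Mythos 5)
Your proof is correct, but it takes a genuinely different route from the paper's. The paper proves this proposition by induction on the dimension $b$: it slices $T$ by its last coordinate into $T_0, \dotsc, T_k \subset \Z^{b-1}$, runs a two-sided recursion along that coordinate with a zero buffer of width $k$, and at each slice must solve a $(b-1)$-dimensional deconvolution with ``leading coefficient'' the tile $T_0$ --- which is exactly an instance of the same proposition one dimension down, and is why the statement is formulated with an arbitrary right-hand side $f$ (the induction generates non-constant right-hand sides even if one only wants $f \equiv 1$). You instead flatten $\Z^b$ by a base-$N$ functional $\phi$ injective on $T$, so that the leading term of each recursion is the value of $g$ at a \emph{single} point (the unique $\phi$-minimal or $\phi$-maximal element of $T$), and then the two-sided recursion of Proposition~\ref{prop:deconvolution-simple} goes through verbatim on $\phi$-fibres, with no induction on dimension at all; your bookkeeping (buffer of width exactly $L = \phi(y^{**}) - \phi(y^*)$, upward steps solving the constraints with $\phi(x) \ge A$, downward steps those with $\phi(x) \le A - 1$, and the observation that neither family of constraints ever touches the other's fibres) is accurate and makes each value assigned once and each constraint satisfied once. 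What each approach buys: yours is more self-contained --- the generality of $f$ comes for free rather than being forced by an induction, and the multidimensional case is literally reduced to the one-dimensional argument; the paper's avoids the choice of an auxiliary functional (no ``sufficiently large $N$'' to justify) and exhibits the statement as stable under slicing, i.e.\ as formal division of multivariate power series, which is arguably the more canonical generalisation. One small presentational point: you should say explicitly that only the uniqueness of the $\phi$-minimiser and $\phi$-maximiser on $T$ is used (full injectivity of $\phi$ on $T$ is a convenient way to guarantee it), and record the standard estimate showing the base-$N$ functional is injective on the finite set $T$ for large $N$; both are routine.
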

        
        \begin{proof}
            Use induction on $b$. The base case $b = 0$
            is trivial ($\Z^0$ being the trivial group),
            so suppose that $b \ge 1$. For any $n \in \Z$,
            write
            $$
                T_n = \{x \in \Z^{b-1} :
                    (x, n) \in T\}.
            $$

            Without loss of generality, assume that $T_0 \neq \es$
            and $T_n = \es$ for all $n < 0$. Write $k$ for the greatest
            integer such that $T_k \neq \es$. In other words, $[0, k]$
            is the minimal interval containing the projection of $T$
            in the last coordinate.

            Set $g(x, n)=0$ for all $x \in \Z^{b-1}$ and all $n \in \Z$
            such that $-k \le n \le -1$. The next step is to define
            $g(x, n)$ for all $n \ge 0$ and $x \in \Z^{b-1}$.
            Consider $N = 0, 1, \dotsc$ in turn, at each step having
            defined $g(x, n)$ whenever $-k \le n \le N - 1$
            and $x \in \Z^{b-1}$.  By the induction hypothesis,
            we can define $g(x, N)$ so that for all $x \in \Z^{b-1}$
            $$
                \sum_{y \in T_0} g(x - y, N) \equiv f(x, N) -
                    \sum_{
                        \mathclap{
                            \substack{
                                1 \le j \le k \\
                                z \in T_j
                            }
                        }
                    }
                    g(x-z, N-j) \hspace{10pt}
                (\text{mod } t).
            $$

            The final step is to define $g(x, n)$ when $n \le -k-1$.
            The argument is similar.
            Consider $N = -k-1, -k-2, \dotsc$ in turn, at each
            step having defined $f(x, n)$ whenever $n \ge N+1$. By
            the induction hypothesis, we can define $g(x, N)$
            so that for each $x \in \Z^{b-1}$
            $$
                \sum_{y \in T_k} g(x - y, N) \equiv f(x, N+k) -
                    \sum_{
                        \mathclap{
                            \substack{
                                0 \le j \le k-1 \\
                                z \in T_j
                            }
                        }
                    }
                    g(x-z, N+k-j) \hspace{10pt}
                (\text{mod } t).
            $$

            These three steps together define $g$ completely,
            and it is easy to see that $g$ satisfies the required condition.
        \end{proof}

        We get Lemma~\ref{lem:coverholes} as a quick corollary. In its
        proof we write $\N$ for $\{1, 2, \dotsc\}$.

        \begin{proof}[Proof of Lemma~\ref{lem:coverholes}]
            Let $g : \Z^b \to \{0,\dotsc,t-1\}$ be such that
            for each $x \in \Z^b$
            $$
                \sum_{y \in T} g(x - y) \equiv 1 \pmod{t}.
            $$
            
            Let $z_1, z_2, \dotsc$ be any enumeration of the elements
            of $\Z^b$.
            We will define sets $F_1, F_2, \dotsc \subset
            \mathcal{F}$ such that $|F_n| = g(z_n)$ for
            any $n \in \N$, and $F_m \cap F_n = \es$ for any
            distinct $m, n \in \N$ with $(T+z_m) \cap (T+z_n)
            \neq \es$. Then we will be done by taking
            $$
                X = \bigcup_{
                    \substack{
                        n \in \N \\
                        A \in F_n
                    }
                }
                \big[
                    (T + z_n) \times A
                \big].
            $$

            The $F_n$ can be defined inductively. Indeed, suppose
            that for some $N \in \N$ the sets $F_1, \dotsc, F_{N-1}$
            are already defined. Then we can define $F_N$ to consist
            of exactly $g(z_N)$ elements of $\mathcal{F}$ that
            are not contained in $F_n$ for any $n \le N-1$ with
            $(T + z_n) \cap (T + z_N) \neq \es$. This is possible,
            because we have at most $|T|^2 - 1$ choices for such
            $n$, and
            \[ 
                g(z_N) + (|T|^2 - 1) \max_{n \le N-1} |F_n|
                    \le (t-1) |T|^2 \le |\mathcal{F}|.
                \qedhere
            \]
        \end{proof}
        
    \subsection{General properties of tilings}
    \label{subsec:general}
        
        In this section we prove some transitivity
        results for tilings. The underlying theme is,
        expressed very roughly, `if $B$ is $A$-tilable
        with the help of $k$ extra dimensions,
        and $C$ is $B$-tilable with the help of $\ell$
        extra dimensions, then $C$ is $A$-tilable with the help
        of $k+\ell$ extra dimensions'.
        
        To avoid making the notation, which is already
        somewhat cumbersome, even more complicated we allow
        ourselves to abuse it in places where this is
        unlikely to create ambiguity. For example, given
        a tiling $X = \bigsqcup X_\alpha$ we may refer
        to the sets $X_\alpha$ as tiles (technically,
        they are not tiles, but copies of tiles). Otherwise,
        the proofs in this section are fairly straightforward.

        \newcommand{\is}{\;\text{ is }\;}

        \begin{prop}
        \label{prop:tilabletransitive}
            Let $G,\;G_1,\dotsc,G_m$ and $H_1,\dotsc,H_n$ be
            abelian groups with tiles $A\subset B\subset C
            \subset G,\; T_i\subset B_i\subset G_i$ and
            $U_i\subset C_i \subset H_i$. Suppose that
            \begin{alignat}{2}
                B_1 \times \dotsb \times B_m \times B
                    &\is&&
                (T_1,\dotsc,T_m, A)
                    \text{-tilable}
                    \label{cond:b-is-a-tilable}
            \shortintertext{and that}
                C_1 \times \dotsb \times C_n \times C^d
                    &\is&&
                (U_1, \dotsc, U_n, \multi[d]{B})
                    \text{-tilable.}
                    \label{cond:c-is-b-tilable}
            \shortintertext{Then}
                B_1 \times \dotsb \times B_m
                    \times C_1 \times \dotsb \times C_n \times C^d
                    &\is&&
                        (
                            T_1, \dotsc, T_m,
                            U_1, \dotsc, U_n, \multi[d]{A}
                        )
                    \text{-tilable}. 
                    \notag
            \end{alignat}
        \end{prop}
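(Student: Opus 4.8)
The plan is to compose the two given tilings, using the translation-invariance of the tilability property as the main engine: if a set $X$ is tilable then so is every translate $X + v$ (just translate each copy), and tiling a disjoint union reduces to tiling each piece. First I would invoke (\ref{cond:c-is-b-tilable}) to write
\[
    C_1 \times \dotsb \times C_n \times C^d = \bigsqcup_\beta V_\beta,
\]
where each $V_\beta$ is either a copy of some $U_i$ or a copy of $B$ sitting in one of the $d$ trailing $G$-factors. Multiplying through by the fixed set $B_1 \times \dotsb \times B_m$ gives
\[
    B_1 \times \dotsb \times B_m \times C_1 \times \dotsb \times C_n \times C^d
        = \bigsqcup_\beta \bigl[ (B_1 \times \dotsb \times B_m) \times V_\beta \bigr],
\]
and since the first $m$ coordinates lie in $G_1, \dotsc, G_m$ while each $V_\beta$ occupies the remaining coordinates, the ordering of factors is respected. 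It therefore suffices to tile each block $W_\beta := (B_1 \times \dotsb \times B_m) \times V_\beta$ by the allowed tiles.

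There are two cases. If $V_\beta$ is a copy of $U_i$, then it is $U_i$ in the $H_i$-coordinate and a single point in every other coordinate, so $W_\beta$ is free over $B_1 \times \dotsb \times B_m$ in the first $m$ coordinates and is exactly a disjoint union, one per point of $B_1 \times \dotsb \times B_m$, of copies of $U_i$ in the ambient group. If instead $V_\beta$ is a copy of $B$ in the $r$-th trailing $G$-factor, then after fixing the constant values in all $H$-coordinates and in the $G$-factors other than the $r$-th, the block $W_\beta$ is a translate of $B_1 \times \dotsb \times B_m \times B$ inside the subproduct $G_1 \times \dotsb \times G_m \times G$, the last $G$ being the $r$-th trailing factor. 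By hypothesis (\ref{cond:b-is-a-tilable}) and translation-invariance, this translate is $(T_1, \dotsc, T_m, A)$-tilable; reattaching the fixed coordinates turns each copy of $T_i$ into a copy of $T_i$ and each copy of $A$ into a copy of $A$ sitting in the $r$-th trailing $G$-factor, which is one of the $d$ $A$-slots of the conclusion.

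Taking the union over $\beta$ then exhibits the whole set as a disjoint union of copies of $T_1, \dotsc, T_m, U_1, \dotsc, U_n$ and $A$ (in the various trailing factors), and disjointness is automatic because the blocks $W_\beta$ are pairwise disjoint. This is precisely $(T_1, \dotsc, T_m, U_1, \dotsc, U_n, \multi[d]{A})$-tilability. The argument is essentially bookkeeping; the only step requiring genuine care, and the main potential pitfall, is checking that a copy of a tile in one of the two local product spaces really becomes a copy of that same tile once the complementary coordinates are reinserted as fixed points, and that the two cases together yield exactly the tile-multiset $(T_1, \dotsc, T_m, U_1, \dotsc, U_n, \multi[d]{A})$ rather than misplacing a copy of $A$ into the wrong $G$-factor.
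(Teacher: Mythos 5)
Your proof is correct and is essentially the paper's own argument: both decompose the big product along the $(U_1,\dotsc,U_n,\multi[d]{B})$-tiling of $C_1\times\dotsb\times C_n\times C^d$, then tile each block $(B_1\times\dotsb\times B_m)\times V_\beta$ either by the hypothesised $(T_1,\dotsc,T_m,A)$-tiling (when $V_\beta$ is a copy of $B$) or by slicing into copies of $U_i$ over the points of $B_1\times\dotsb\times B_m$ (when $V_\beta$ is a copy of some $U_i$). Your extra bookkeeping about translation-invariance and reinserting fixed coordinates is exactly the detail the paper leaves implicit.
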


        Let us unravel the statement of this proposition. Intuitively,
        condition (\ref{cond:b-is-a-tilable}) asserts that `$B$ is almost
        $A$-tilable' -- the extra dimensions $B_1, \dotsc, B_m$ are used
        to fill the gaps. Similarly, condition (\ref{cond:c-is-b-tilable})
        asserts that `$C^d$ is almost $B$-tilable' -- here we use the
        extra dimensions $C_1, \dotsc, C_n$. Finally, the conclusion
        states that `$C^d$ is almost $A$-tilable' -- we use all the extra
        dimensions, $B_1, \dotsc, B_m$ and $C_1, \dotsc, C_n$, to complete
        this tiling.

        \begin{proof}
            For each tile $X$ in the $(U_1, \dotsc, U_n,\multi[d]{B})$%
            -tiling of $C_1 \times \dotsb \times C_n \times
            C^d$, partition the set $B_1 \times \dotsb \times
            B_m \times X$ in one of the two following ways:

            \begin{itemize}
                \item if $X$ is a copy of $B$, then partition
                    $B_1 \times \dotsc \times B_m \times X$
                    into its $(T_1, \dotsc, T_m,A)$-tiling;
                \item otherwise (that is, if $X$ is a copy
                    of one of the $U_1, \dotsc, U_n$),
                    partition the set into copies of $X$, namely
                    $\{b\} \times X$ for each $b \in
                    B_1 \times \dotsc \times B_n$.
            \end{itemize}

            This produces a $(T_1, \dotsc, T_m, U_1,
            \dotsc, U_n, \multi[d]{A})$-tiling of $B_1 \times \dotsb
            \times B_m \times C_1 \times \dotsb \times C_n
            \times C^d$.
        \end{proof}

        In the proof of the main theorem, we will apply this
        result in the following more compact form.

        \begin{cor}
        \label{cor:tilingtransitive}
            Let $G$ and $H$ be abelian groups with tiles $T \subset G$
            and $A \subset B \subset H$. Suppose that 
            \begin{alignat}{2}
                \label{eq:ghbtilable}
                G^k \times H^{\ell} \times B^d
                    &\is&&
                (\multi[k]{T}, \multi[(\ell+d)]A)
                    \text{-tilable}
            \shortintertext{and that}
                \label{eq:ghtilable}
                G^u \times H^v
                    &\is&&
                (\multi[u]{T}, \multi[v]{B})
                    \text{-tilable.}
            \shortintertext{Then}
                G^{du+k} \times H^{dv+\ell}
                    &\is&&
                (\multi[(du+k)]{T}, \multi[(dv+l)]{A})
                    \text{-tilable.}
                    \notag
            \end{alignat}
        \end{cor}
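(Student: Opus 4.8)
The plan is to prove Corollary~\ref{cor:tilingtransitive} by induction on $d$, invoking Proposition~\ref{prop:tilabletransitive} exactly once in the inductive step. Each invocation will strip a single factor of $B$ off the set in hypothesis (\ref{eq:ghbtilable}), re-tile it by copies of $A$, and in return glue on one fresh copy of the space $G^u \times H^v$ furnished by (\ref{eq:ghtilable}). The case $d = 0$ is immediate, since then (\ref{eq:ghbtilable}) reads that $G^k \times H^\ell$ is $(\multi[k]{T}, \multi[\ell]{A})$-tilable, which is already the claimed conclusion.

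For the inductive step, I would assume $d \ge 1$ and that the statement holds for $d - 1$, then apply Proposition~\ref{prop:tilabletransitive} with the group $H$ in the role of its $G$ and with the nested tiles $A \subset B \subset C$, where $C := H$ is the whole group. The essential — and most error-prone — move is the choice of auxiliary dimensions: I read the set $G^k \times H^\ell \times B^d$ appearing in (\ref{eq:ghbtilable}) as $\big(G^k \times H^\ell \times B^{d-1}\big) \times B$, folding the factor $B^{d-1}$ into the proposition's auxiliary groups $B_1 \times \dotsb \times B_m$. Concretely, these auxiliary factors are the $k$ copies of $G$ (each with inner tile $T$), the $\ell$ copies of $H$ (each with inner tile $A$), and the $d-1$ copies of $B$ (each with the \emph{smaller} inner tile $A \subset B$); the single distinguished last factor is one remaining copy of $B$, to be tiled by $A$. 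It is important here that the proposition permits an auxiliary factor whose ``large'' tile is a proper tile such as $B$ rather than a full group — this is exactly what lets $B^{d-1}$ be absorbed. With this dictionary the first hypothesis (\ref{cond:b-is-a-tilable}) of the proposition is literally (\ref{eq:ghbtilable}), because $(\multi[k]{T}, \multi[\ell]{A}, \multi[(d-1)]{A}, A)$ is the same list as $(\multi[k]{T}, \multi[(\ell+d)]{A})$.

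For the second hypothesis (\ref{cond:c-is-b-tilable}) I would feed in (\ref{eq:ghtilable}) directly, viewing $G^u \times H^v$ as $G^u \times C^v$ with $C = H$: its auxiliary factors $C_1 \times \dotsb \times C_n$ are the $u$ copies of $G$ (each with inner tile $T$), and $C^v = H^v$ is tiled by $v$ copies of $B$, which is exactly the content of (\ref{eq:ghtilable}). The proposition then produces a tiling of $G^k \times H^\ell \times B^{d-1} \times G^u \times H^v$ of type $(\multi[k]{T}, \multi[(\ell+d-1)]{A}, \multi[u]{T}, \multi[v]{A})$. Reordering the coordinates, this says exactly that $G^{k+u} \times H^{\ell+v} \times B^{d-1}$ is $(\multi[(k+u)]{T}, \multi[(\ell+v+d-1)]{A})$-tilable — which is nothing but hypothesis (\ref{eq:ghbtilable}) again, now with $d$ lowered to $d-1$ and with $k, \ell$ enlarged to $k+u, \ell+v$.

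This is what makes the induction close. Applying the inductive hypothesis (the corollary for $d-1$) with the enlarged parameters $k+u$ and $\ell+v$, and with the same hypothesis (\ref{eq:ghtilable}), yields that $G^{(d-1)u + (k+u)} \times H^{(d-1)v + (\ell+v)}$ is tilable by the corresponding numbers of copies of $T$ and $A$; since $(d-1)u + (k+u) = du + k$ and $(d-1)v + (\ell+v) = dv + \ell$, this is precisely the desired conclusion. The only genuine obstacle is getting the substitution into Proposition~\ref{prop:tilabletransitive} right — once one sees that $B^{d-1}$ belongs among the auxiliary dimensions and that the regrouped output is again an instance of (\ref{eq:ghbtilable}) with smaller $d$, the remaining dimension bookkeeping is routine.
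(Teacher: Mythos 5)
Your proof is correct and takes essentially the same route as the paper's: induction on $d$, with a single application of Proposition~\ref{prop:tilabletransitive} in the inductive step that folds $B^{d-1}$ into the auxiliary factors (with inner tile $A$), regroups the output as an instance of (\ref{eq:ghbtilable}) with $d-1$ and enlarged parameters $k+u$, $\ell+v$, and then closes via the induction hypothesis. The bookkeeping $(d-1)u+(k+u)=du+k$ and $(d-1)v+(\ell+v)=dv+\ell$ is exactly how the paper concludes as well.
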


        \begin{proof}
            Use induction on $d$. The base case $d = 0$ is
            trivial, so suppose that $d \ge 1$.

            Rewrite ($\ref{eq:ghbtilable}$) to state that
            $$
                G^k \times H^l \times B^{d-1} \times B
                    \text{ is }
                (\multi[k]{T}, \multi[(\ell+d-1)]{A}, A)
                    \text{-tilable.}
            $$
            Now Proposition~\ref{prop:tilabletransitive}
            applied to this and (\ref{eq:ghtilable}) implies that
            $$
                G^k \times H^l \times B^{d-1}
                    \times G^u \times H^v
                    \text{ is }
                (\multi[k]{T}, \multi[(\ell+d-1)]{A},
                 \multi[u]{T}, \multi[v]{A})
                    \text{-tilable,}
            $$
            which after reordering and combining terms becomes the statement
            that
            $$
                G^{u+k} \times H^{v+l} \times B^{d-1}
                    \text{ is }
                (\multi[(u+k)]{T}, \multi[(v+l+d-1)]{A})
                    \text{-tilable.}
            $$
            Finally, apply the induction hypothesis to this and
            $(\ref{eq:ghtilable})$ to conclude the proof.
        \end{proof}

        The following straightforward proposition allows tilings to be lifted
        via surjective homomorphisms.

        \begin{prop}
        \label{prop:lifting}
            Let $G, H$ and $G_1, \dotsc, G_n$ be abelian groups
            with tiles $T \subset G$ and $U_i \subset G_i$,
            and let $\rho : G \to H$ be a surjective homomorphism
            that is injective on $T$. If 
            $G_1 \times \dotsb \times G_n \times H$ is
            $(U_1, \dotsc, U_n, \rho(T))$-tilable, then
            $G_1 \times \dotsb \times G_n \times G$ is
            $(U_1, \dotsc, U_n, T)$-tilable.
        \end{prop}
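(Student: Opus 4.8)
The plan is to pull the given tiling back along the surjective homomorphism $\Phi = \mathrm{id} \times \rho : G_1 \times \dotsb \times G_n \times G \to G_1 \times \dotsb \times G_n \times H$, and to check that each resulting preimage decomposes into copies of the appropriate tile. Writing $K = \ker\rho$, the first observation is that since $\Phi$ is surjective, the preimages $\Phi^{-1}(P)$ of the pieces $P$ of the $(U_1, \dotsc, U_n, \rho(T))$-tiling of $G_1 \times \dotsb \times G_n \times H$ automatically form a partition of $G_1 \times \dotsb \times G_n \times G$. So it suffices to treat a single piece $P$ at a time.

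If $P$ is a copy of some $U_i$ with $1 \le i \le n$, then $P$ has a single fixed value $h$ in the last coordinate, and $\Phi^{-1}(P)$ merely replaces $h$ by the whole coset $\rho^{-1}(h) = g_0 + K$ (for any fixed $g_0$ with $\rho(g_0) = h$). Grouping by $\kappa \in K$ then exhibits $\Phi^{-1}(P)$ as a disjoint union of copies of $U_i$, with nothing to prove.

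The substantive case is when $P$ is a copy of $\rho(T)$, say $P = \{a\} \times (\rho(T) + h)$. Here I would fix any $g_0$ with $\rho(g_0) = h$ and show that
\[
    \rho^{-1}\bigl(\rho(T) + h\bigr) = \bigsqcup_{\kappa \in K} (T + g_0 + \kappa),
\]
so that $\Phi^{-1}(P) = \{a\} \times \rho^{-1}(\rho(T) + h)$ is a disjoint union of copies of $T$. The inclusion $\supseteq$ is immediate, and for $\subseteq$ one notes that $\rho(g) \in \rho(T) + h$ forces $\rho(g - g_0) = \rho(t)$ for some $t \in T$, hence $g \in t + g_0 + K$.

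The one point that genuinely uses the hypotheses — and the step I would flag as the heart of the argument — is the disjointness of the translates $T + g_0 + \kappa$: if $t_1 + \kappa_1 = t_2 + \kappa_2$ then $t_1 - t_2 \in K$, hence $\rho(t_1) = \rho(t_2)$, and since $\rho$ is injective on $T$ this gives $t_1 = t_2$ and $\kappa_1 = \kappa_2$. Without injectivity on $T$ the copies would overlap and the preimage would not split cleanly into copies of $T$. Assembling the two cases over all pieces $P$ produces the desired $(U_1, \dotsc, U_n, T)$-tiling of $G_1 \times \dotsb \times G_n \times G$.
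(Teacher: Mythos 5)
Your proposal is correct and follows essentially the same route as the paper: the paper also pulls each piece back along $\mathrm{id} \times \rho$ (its sets $\hat{X}$ are exactly your $\Phi^{-1}(P)$), splits into the same two cases, and uses injectivity of $\rho$ on $T$ to decompose $T + g_0 + \ker(\rho)$ into disjoint copies of $T$. Your write-up just makes explicit the coset decomposition and disjointness check that the paper leaves as "obvious".
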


        \begin{proof}
            For any tile $X$ in the $(U_1, \dots, U_n,
            \rho(T)$)-tiling of $G_1 \times \dotsb \times G_n
            \times H$, let $\hat{X}$ denote the set
            $$
                \hat{X} =
                    \{
                        (x_1, \dotsc, x_n, x) \in
                        G_1 \times \dotsb \times G_n \times G :
                        (x_1, \dotsc, x_n, \rho(x)) \in X
                    \}.
            $$

            For every $X$, partition $\hat{X}$ in one of the two
            following ways:
            \begin{itemize}
                \item if $X$ is a copy of $U_i$ for some
                    $1 \le i \le n$, then partition $\hat{X}$
                    into copies of $U_i$ in the obvious way;
                \item if $X$ is a copy of $\rho(T)$, then
                    $X = \{(x_1, \dotsc, x_n)\} \times \rho(T+x)$
                    for some $x_i \in G_i$ and $x \in G$. Hence
                    $\hat{X} = \{(x_1, \dotsc, x_n)\} \times
                    (T + x + \ker(\rho))$, and as $\rho$ is
                    injective on $T$, this can be partitioned
                    into copies of $T$.
            \end{itemize}

            Since the sets $\hat{X}$ partition $G_1 \times
            \dotsb \times G_n \times G$, this produces a
            $(U_1, \dotsc, U_n, T)$-tiling for it.
        \end{proof}

        An inductive application of this proposition gives the following
        result, which we will use in the proof of the main theorem.

        \begin{cor}
        \label{cor:completelifting}
            Let $G$ and $H$ be abelian groups, and let $T \subset G$
            be a tile. Moreover, suppose that a surjective
            homomorphism $\rho : G \to H$ is injective on $T$.
            If $G^k \times H^\ell$ is $(\multi[k]{T}, \multi[\ell]%
            {\rho(T)})$-tilable, then $G^{k+\ell}$ is $T$-tilable.
            \qed
        \end{cor}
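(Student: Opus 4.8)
The plan is to peel off the copies of $H$ one at a time, converting each into a copy of $G$ by a single application of Proposition~\ref{prop:lifting}, and to organise the whole argument as an induction on $\ell$. The hypotheses of Proposition~\ref{prop:lifting} on $\rho$ (surjective, and injective on $T$) are exactly those given here, so the proposition will be available at every step.

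For the base case $\ell = 0$, the hypothesis simply asserts that $G^k$ is $\multi[k]{T}$-tilable, that is, $T$-tilable; since $k + \ell = k$ there is nothing more to prove. For the inductive step, suppose $\ell \ge 1$ and that the statement holds with $\ell$ replaced by $\ell - 1$ (for every value of $k$). I would write the given space as $(G^k \times H^{\ell-1}) \times H$ and apply Proposition~\ref{prop:lifting} with the groups $G_1, \dotsc, G_n$ taken to be the $k$ copies of $G$ (each with tile $T$) together with the $\ell - 1$ copies of $H$ (each with tile $\rho(T)$), and with the final factor being the remaining copy of $H$ (with tile $\rho(T)$). The proposition then yields that $(G^k \times H^{\ell-1}) \times G$ is $(\multi[k]{T}, \multi[(\ell-1)]{\rho(T)}, T)$-tilable. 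Tilability is clearly preserved under permuting the factor groups (a permutation of coordinates simply permutes the copies), so after reordering this says precisely that $G^{k+1} \times H^{\ell-1}$ is $(\multi[(k+1)]{T}, \multi[(\ell-1)]{\rho(T)})$-tilable.

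At this point I would invoke the induction hypothesis with $k$ replaced by $k+1$ and $\ell$ replaced by $\ell-1$. Since $(k+1) + (\ell-1) = k + \ell$, this gives that $G^{k+\ell}$ is $T$-tilable, completing the induction. There is no substantial obstacle in this argument: the only point requiring any care is the bookkeeping of the factor groups, together with the (evident) observation that they may be freely reordered, so that each application of Proposition~\ref{prop:lifting} reduces the number of $H$-factors by exactly one while raising the number of $G$-factors by one, with the total $k+\ell$ held fixed throughout.
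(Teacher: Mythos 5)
Your proof is correct and is exactly what the paper intends: the paper gives no written argument for Corollary~\ref{cor:completelifting} beyond the remark that it follows from ``an inductive application'' of Proposition~\ref{prop:lifting}, and your induction on $\ell$ (converting one $H$-factor into a $G$-factor per step, reordering coordinates, and invoking the induction hypothesis with $k+1$ and $\ell-1$) is precisely that application, carried out with the bookkeeping made explicit.
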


    \subsection{Proof of the main theorem}
    \label{subsec:formalproof}

        The tools needed for the proof Theorem~\ref{thm:main} are now
        available.

        \main*

        \begin{proof}
            Without loss of generality assume that $T
            \subset [k]^b$, where $k \in \N$. Write $G =
            \Zk{k}^b$ and let $\pi : \Z^b \to G$ be the
            projection map. In particular, $\pi(T)$
            is a tile in $G$.
            
            \begin{claim}
                Suppose that $A \subset G$ is a tile. Then
                there exist integers $p \ge 0$ and $q \ge 1$ such that
                $(\Z^b)^p \times G^q$ is $(\multi[p]{T},
                \multi[q]{A})$-tilable.
            \end{claim}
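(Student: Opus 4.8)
The plan is to prove the claim by induction on the quantity $|G \sm A|$. The base case is $A = G$, where $G$ itself is a single copy of the tile $A$, so we may take $p = 0$ and $q = 1$. For the inductive step, suppose $A \subsetneq G$ and assume the claim holds for every tile that is a strictly larger subset of $G$.

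First I would apply Lemma~\ref{lem:constructdenser} to the tile $A$ (playing the role of the tile called `$T$' there), obtaining a strictly larger tile $B$ with $A \subsetneq B \subset G$ together with its almost-tiling property. Since $|G \sm B| < |G \sm A|$, the induction hypothesis applies to $B$ and yields integers $u \ge 0$ and $v \ge 1$ such that $(\Z^b)^u \times G^v$ is $(\multi[u]{T}, \multi[v]{B})$-tilable. This is exactly hypothesis~(\ref{eq:ghtilable}) of Corollary~\ref{cor:tilingtransitive}, applied with the two abelian groups taken to be $\Z^b$ (carrying the tile $T$) and $G$ (carrying $A \subset B$).

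The main work is to establish the remaining hypothesis~(\ref{eq:ghbtilable}): that $\Z^b \times G \times B^d$ is $(T, \multi[(1+d)]{A})$-tilable for a suitable $d$. To this end I would set $t = |A|$, fix $d_0 \ge \max\{1, (t-1)|T|^2\}$, and use Lemma~\ref{lem:constructdenser} to produce a dimension $d \ge d_0$ and a family $\mathcal{F}$ of at least $d_0$ pairwise disjoint subsets of $B^d$ enjoying the stated property. Feeding $S = B^d$, this family $\mathcal{F}$, and the modulus $t = |A|$ into Lemma~\ref{lem:coverholes} (whose hypothesis $|\mathcal{F}| \ge (t-1)|T|^2$ is guaranteed by the choice of $d_0$) yields a set $X \subset \Z^b \times B^d$ that is a disjoint union of copies of $T$ in the $\Z^b$-direction and for which each fibre $\{y \in B^d : (x,y) \in X\}$ is a union of $m_x$ members of $\mathcal{F}$ with $m_x \equiv 1 \pmod{|A|}$. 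Now I would split $\Z^b \times G \times B^d$ into the region $R = \{(x,g,y) : (x,y) \in X\}$ and its complement. The region $R$ is a disjoint union of copies of $T$, since for each fixed $(g,y)$ the set $\{x : (x,y) \in X\}$ is a disjoint union of translates of $T$; and for each fixed $x$ the complementary slice is $\{x\} \times \big(G \times (B^d \sm \bigcup_{S \in \mathcal{S}_x} S)\big)$ with $|\mathcal{S}_x| = m_x \equiv 1 \pmod{|A|}$, hence $A$-tilable directly by Lemma~\ref{lem:constructdenser}. Together these two parts furnish~(\ref{eq:ghbtilable}) with $k = \ell = 1$.

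Finally I would apply Corollary~\ref{cor:tilingtransitive} to~(\ref{eq:ghbtilable}) and~(\ref{eq:ghtilable}) to conclude that $(\Z^b)^{du+1} \times G^{dv+1}$ is $(\multi[(du+1)]{T}, \multi[(dv+1)]{A})$-tilable; taking $p = du+1$ and $q = dv+1 \ge 1$ completes the induction. Since all of the genuinely hard tiling constructions are already packaged inside Lemmas~\ref{lem:constructdenser} and~\ref{lem:coverholes}, I expect the only real obstacle to be the bookkeeping that makes the congruences line up: one must choose the modulus in Lemma~\ref{lem:coverholes} to be precisely $|A|$ so that every fibre size $m_x$ satisfies $m_x \equiv 1 \pmod{|A|}$, which is exactly the condition under which Lemma~\ref{lem:constructdenser} tiles each slice, while simultaneously keeping $\mathcal{F}$ large enough for both lemmas to apply.
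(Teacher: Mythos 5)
Your proposal is correct and follows essentially the same route as the paper: reverse induction on the size of the tile (your $|G \sm A|$ is the paper's reverse induction on $|A|$), Lemma~\ref{lem:constructdenser} to produce $B$ and $\mathcal{F}$, Lemma~\ref{lem:coverholes} with modulus $t=|A|$ to tile the special $\Z^b$ direction so that every slice complement is $A$-tilable, the induction hypothesis applied to $B$, and Corollary~\ref{cor:tilingtransitive} to combine, yielding $p=du+1$, $q=dv+1$ exactly as in the paper. The only (immaterial) difference is bookkeeping: you apply Lemma~\ref{lem:coverholes} with $S=B^d$ and then cross with $G$, where the paper applies it directly with $S=G\times B^{d_1}$, and you make the required size bound $d_0 \ge (|A|-1)|T|^2$ explicit where the paper just says ``$d_0$ large.''
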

            
            \begin{claimproof}
                Use reverse induction on $|A|$.
                If $|A| = |G|$ then in fact $A = G$,
                and the claim holds with $p = 0$, $q = 1$.
                So suppose that $|A| \le |G|-1$.

                Applying Lemma~\ref{lem:constructdenser} to
                the tile $A$ with fixed large $d_0$ produces a number
                $d_1 \ge d_0$, a set $B$ such that $A \subsetneq B
                \subset G$ and a family $\mathcal{F}$
                ($|\mathcal{F}| \ge d_0$) of pairwise disjoint
                subsets of $B^{d_1}$ with the property that for
                any subfamily $\mathcal{S} \subset \mathcal{F}$ of size
                satisfying $|\mathcal{S}| \equiv 1\pmod{|A|}$, the set
                $$
                    G \times
                        \left(
                            B^{d_1} \sm \bigcup_{S\in\mathcal{S}}
                            S
                        \right)
                $$
                is $A$-tilable.

                Since $d_0$ is large, Lemma~\ref{lem:coverholes}
                gives a set $X \subset \Z^b \times G \times B^{d_1}$
                that is a disjoint union of copies of $T$, and such
                that for every $x \in \Z^b$ the slice $\{y \in G\times
                B^{d_1} : (x, y) \in X\}$ is a hole in $G \times B^{d_1}$.
                Therefore $\Z^b \times G \times B^{d_1}$ is
                $(T, \multi[(d_1+1)]{A})$-tilable.

                By the induction hypothesis, there exist $u \ge 0$
                and $v \ge 1$ such that $(\Z^b)^{u} \times
                G^{v}$ is ${(\multi[u]{T}, \multi[v]{B})}$-tilable.
                Now apply Corollary~\ref{cor:tilingtransitive}
                to conclude that the claim holds with
                $p = d_{1}u+1$ and $q = d_{1}v+1$. This proves the claim.
            \end{claimproof}

            To finish the proof of the theorem, apply the claim
            to the tile $\pi(T)$.
            This gives $p \ge 0$ and $q \ge 1$
            such that $(\Z^b)^p \times G^q$ is $(\multi[p]{T},
            \multi[q]{\pi(T)})$-tilable. Hence, by Corollary~%
            \ref{cor:completelifting}, $(\Z^b)^{p+q}$ is $T$-tilable.
        \end{proof}

    \section{Concluding remarks and open problems}
    \label{sec:conclusion}

    We mention in passing that all our tilings are (or can be made to be)
    periodic. Also, our copies of $T$ arise only from translations and
    permutations of the coordinates -- in particular, `positive directions
    stay positive'.

    We have made no attempt to optimise the dimension $d$ in Theorem~%
    \ref{thm:main}. What can be read out of the proof is the following.

    \begin{thmprimed}[\ref{thm:main}]
        Let $T \subset \Z^n$ be a tile and suppose that $T \subset [k]^n$.
        Then $T$ tiles $\Z^d$, where $d = \lceil \exp(100 (n \log k)^2)
        \rceil$.
    \end{thmprimed}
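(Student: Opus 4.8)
The plan is to track the dimension through the proof of Theorem~\ref{thm:main} and to sharpen exactly one step. The final dimension produced there is $n(p+q)$, where $(p,q)$ is the output of the Claim applied to $\pi(T)$ (here $b=n$, since $T\subset[k]^n$). The Claim is established by reverse induction on $|A|$, and at each level the pair $(p,q)$ is replaced, via Corollary~\ref{cor:tilingtransitive}, by $(d_1u+1,\,d_1v+1)$, where $d_1$ is the dimension supplied by Lemma~\ref{lem:constructdenser} and $(u,v)$ is the pair for the denser tile. Thus $p+q$ is multiplied by roughly $d_1$ at each level, so $\ln(p+q)\approx\sum_{\text{levels}}\ln d_1$. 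The whole estimate therefore rests on two quantities: the number $L$ of induction levels, and the size of the multiplier $d_1$ at each level.

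The per-level multiplier is easy to bound. In Lemma~\ref{lem:constructdenser} one may take $d_1\le(1+|G|/|A|)\,d_0$, and to feed Lemma~\ref{lem:coverholes} it is enough to take $d_0=(|A|-1)|T|^2$. Since $|G|=k^n$, $|A|\le k^n$ and $|T|\le k^n$, this gives $d_1\le 2k^{4n}$, so $\ln d_1=O(n\log k)$ at every level.

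The main point --- and the step I would sharpen --- is to show that $L=O(n\log k)$ rather than the trivial $L=O(|G|)=O(k^n)$ that would come from decreasing $|A|$ by one at a time. At a level of the Claim with current tile $A$, the denser tile produced by Lemma~\ref{lem:constructdenser} has the form $B=A\cup(A+x)$ for some $x$ with $A+x\neq A$, and its proof is insensitive to which such $x$ is used; so I am free to choose $x$ optimally. Writing $c=|G|-|A|$ for the codensity and $\bar A=G\sm A$ for the complement (so $|\bar A|=c$), the codensity of $B$ is $|\bar A\cap(\bar A+x)|$. Since $\sum_{x\in G}|\bar A\cap(\bar A+x)|=|\bar A|^2=c^2$, averaging over $x\neq 0$ yields a choice with $|\bar A\cap(\bar A+x)|\le c^2/|G|$, which is strictly less than $c$; in particular $B\supsetneq A$, so this $x$ is admissible. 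Hence the codensity obeys $c'\le c^2/|G|$, i.e.\ the codensity fraction $\delta=c/|G|$ satisfies $\delta'\le\delta^2$ and thus squares at each level. Starting from $\delta_0\le 1-1/|G|$, one reaches $c=0$ once $\delta_0^{2^L}<1/|G|$, which holds as soon as $2^L>|G|\ln|G|$; thus $L=O(\log|G|)=O(n\log k)$.

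Combining the two estimates gives
\[
    \ln(p+q)\;\le\;\sum_{i=1}^{L}\ln d_1^{(i)}+O(L)\;=\;O\bigl(L\cdot n\log k\bigr)\;=\;O\bigl((n\log k)^2\bigr),
\]
and hence $\ln d=\ln\bigl(n(p+q)\bigr)=O((n\log k)^2)$. Tracking the (very generous) implied constants through the estimates above yields $\ln d\le 100\,(n\log k)^2$, which is the claimed bound. The only genuinely new ingredient beyond the qualitative proof is the averaging choice of $x$; the main thing to verify carefully is that this choice is compatible with the construction in Lemma~\ref{lem:constructdenser} and that the codensity recursion $c'\le c^2/|G|$ indeed collapses the number of levels to logarithmic, everything else being bookkeeping of multipliers already present in the proof.
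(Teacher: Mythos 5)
Your proposal is correct, and the comparison here is necessarily with an \emph{implicit} argument: the paper states this quantitative theorem only with the remark that it can be read out of the proof of Theorem~\ref{thm:main}, and never writes down the accounting. Your reconstruction follows that accounting faithfully --- the recursion $(p,q)\mapsto(d_1u+1,\,d_1v+1)$ through the Claim, Lemma~\ref{lem:constructdenser}, Lemma~\ref{lem:coverholes} and Corollary~\ref{cor:tilingtransitive}, with final dimension $n(p+q)$ --- and your per-level bounds are right: $d_0=(|A|-1)|T|^2\le k^{3n}$ suffices for Lemma~\ref{lem:coverholes} (the subfamilies actually used there have size at most $|T|(|A|-1)\le d_0$), and $d_1\le(1+|G|/|A|)d_0\le 2k^{4n}$. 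The important point, which you identify exactly, is that the qualitative proof as written does \emph{not} yield the stated bound: Lemma~\ref{lem:constructdenser} fixes an arbitrary $x$ with $T+x\ne T$, so the reverse induction on $|A|$ could pass through as many as $|G|-|T|\approx k^n$ levels, giving only $\ln d=O(k^n\cdot n\log k)$, far weaker than quasi-polynomial in $k^n$. Your averaging choice of $x$ is precisely the missing ingredient, and it is sound: the proof of Lemma~\ref{lem:constructdenser} uses nothing about $x$ beyond $A+x\ne A$; the admissibility check is right, since the chosen $x$ gives $|\overline{A}\cap(\overline{A}+x)|\le c^2/|G|<c$, which forces $A+x\ne A$; and the codensity recursion $\delta'\le\delta^2$ collapses the level count to $L=O(\log|G|)=O(n\log k)$ because $(1-1/|G|)^{2^L}<1/|G|$ once $2^L>|G|\ln|G|$. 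The remaining bookkeeping, $p+q\le\prod_i\bigl(d_1^{(i)}+2\bigr)\le\bigl(4k^{4n}\bigr)^L$ and hence $\ln d\le\ln n+L\,(4n\ln k+\ln 4)$, lands well under the constant $100$ (a crude tally gives roughly $30(n\log k)^2$). So: same skeleton as the paper's intended argument, plus one genuinely new, and genuinely necessary, averaging step.
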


    Thus our upper bound on $d$ is superpolynomial in the variable $k^n$.
    We believe that there should be an upper bound on $d$ in terms only
    of the size and dimension of $T$. Even in the case $n=1$ this seems
    to be a highly non-trivial question.

    \begin{conj}
        For any positive integer $t$ there is a number $d$ such that any
        tile $T \subset \Z$ with $|T| \le t$ tiles $\Z^d$.
    \end{conj}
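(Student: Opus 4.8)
The plan is to prove the stronger statement that there is a nondecreasing function $D:\N\to\N$ with the property that every tile $T\subset\Z$ satisfying $|T|\le t$ tiles $\Z^{D(t)}$, proceeding by induction on $t$. The base case $t=1$ is trivial with $D(1)=1$. The real content of the inductive step is to eliminate, at the cost of only finitely-many-in-$t$ extra dimensions, all dependence on the \emph{diameter} of $T$: Theorem~\ref{thm:main} already supplies a finite dimension for each individual $T$, and its quantitative form depends on the width $k$ (with $T\subset[k]$), so the entire difficulty is to trade width for cardinality.

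First I would record two dimension-preserving reductions. (i) \emph{Affine invariance}: if $T'\subset\Z$ tiles $\Z^d$ then so does $a+gT'$ for every integer $a$ and every $g\ge 1$, since $\Z^d$ is the disjoint union of the $g^d$ cosets of $(g\Z)^d$ and a $T'$-tiling scales and translates into each coset. Writing $g$ for the greatest common divisor of the pairwise differences of $T$, the tile $(T-\min T)/g$ therefore tiles exactly the same $\Z^d$ as $T$, so I may assume the differences of $T$ have gcd $1$. (ii) \emph{Monotonicity}: if $T$ tiles $\Z^m$ it tiles $\Z^{m+1}$, by tiling each slice $\Z^m\times\{j\}$; hence dimensions obtained below may be freely padded to a common value.

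The inductive step splits $T$ modulo a small prime. By Bertrand's postulate fix a prime $p$ with $t\le p\le 2t$ and let $\pi_p:\Z\to\Zk{p}$ be the reduction map. Because the differences of $T$ have gcd $1$, the tile $T$ is \emph{not} contained in a single class mod $p$ (otherwise $p$ would divide every difference, hence divide $1$), so $T$ meets $r\ge 2$ residue classes and each class $T^{(j)}=T\cap(c_j+p\Z)$ has $|T^{(j)}|\le t-1$. Applying the affine map $x\mapsto(x-c_j)/p$ turns each $T^{(j)}$ into a tile $\widehat T^{(j)}\subset\Z$ of cardinality at most $t-1$, which by the induction hypothesis tiles $\Z^{D(t-1)}$; after padding via (ii) I may assume every class tiles the \emph{same} space $\Z^{D(t-1)}$.

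It remains to glue, and this is where the genuine difficulty lies. The coarse pattern of $T$ lives in the \emph{bounded} group $\Zk{p}$ (of size $O(t)$) as the shadow $\pi_p(T)$, while the fine structure inside each class is carried by the already-constructed $\Z^{D(t-1)}$-tilings. I would realise this shadow along one special $\Z$-direction, exactly as in Lemma~\ref{lem:coverholes}, laying down copies of $T$ whose $\Zk{p}$-projection is the required pattern so that what remains in each slice is a disjoint union of scaled class-tiles, then complete each slice by the induction-hypothesis tilings and organise the bookkeeping through the transitivity Corollary~\ref{cor:tilingtransitive}, so that extra dimensions \emph{add} rather than compound with the width. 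The hoped-for outcome is $D(t)\le D(t-1)+c(t)$ with $c(t)$ governed only by the size $p=O(t)$ of $\Zk{p}$, whence summing gives a bound depending on $t$ alone. The main obstacle is that $\pi_p$ is \emph{not} injective on $T$ once some $|T^{(j)}|\ge 2$, so the lifting Corollary~\ref{cor:completelifting} does not apply directly; one must instead resolve these collisions while matching the coarse scale of the $\Zk{p}$-pattern to the fine scale of the intra-class tilings, using only $c(t)$ further dimensions and with no reappearance of the diameter. Making this scale-matching quantitative and diameter-free—plausibly through a non-injective strengthening of Proposition~\ref{prop:lifting} combined with the density-increment machinery of Lemma~\ref{lem:constructdenser} run inside the bounded group $\Zk{p}$—is the crux I cannot currently complete, consistent with the authors' remark that even the one-dimensional case appears highly non-trivial.
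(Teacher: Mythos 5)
This statement is one of the paper's \emph{open conjectures}, stated in the concluding remarks precisely because the authors cannot prove it; there is no proof in the paper to compare against, and your proposal does not close the gap either --- as you candidly acknowledge in your final sentence. Your preparatory reductions are correct: affine invariance (a $T'$-tiling of $\Z^d$ scales into each of the $g^d$ cosets of $(g\Z)^d$, and per-coset translates absorb the shift $a$) and monotonicity (tile each slice $\Z^m \times \{j\}$) are both fine, and the splitting modulo a prime is sound --- since the differences of $T$ have gcd $1$, the tile meets at least two residue classes mod $p$, so every class has at most $t-1$ elements and the rescaled classes $\widehat{T}^{(j)}$ fall under the induction hypothesis. (Bertrand's postulate is not really needed here; any prime would give $r \ge 2$, though keeping $p = O(t)$ is of course what you want for the intended bookkeeping.)

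The gluing step, however, is not a technical loose end but the entire problem, and it fails at two concrete points. First, the lifting machinery is unavailable: Proposition~\ref{prop:lifting} requires $\rho$ to be injective on $T$, and its proof genuinely uses this --- $T + x + \ker(\rho)$ partitions into translates of $T$ only under injectivity. Once some class satisfies $|T^{(j)}| \ge 2$, a tiling of (anything involving) $\Zk{p}$ by the shadow $\pi_p(T)$ assigns a single point where the fibre must receive several elements of $T$ at prescribed spacings on scale $p$, and nothing in the paper (nor in your sketch) recombines such multiplicity data; indeed $\pi_p(T)$ as a \emph{set} has already lost the cardinalities $|T^{(j)}|$. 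Second, the classes are coupled: every copy of $T$ intersects all $r$ residue classes at \emph{fixed relative offsets} of size up to $\operatorname{diam}(T)$, so after laying down copies of $T$ in the special direction, the leftover in a slice is not a free disjoint union of scaled class tiles whose $\Z^{D(t-1)}$-tilings can be chosen independently and then merged --- the inter-class phases carry diameter-scale information, which is exactly what the induction was supposed to eliminate. Running the density-increment machinery of Lemma~\ref{lem:constructdenser} ``inside $\Zk{p}$'' only yields statements about $\pi_p(T)$, which by the first point cannot be lifted, and Corollary~\ref{cor:tilingtransitive} needs the tiles in its hypotheses to live in independent product coordinates, whereas your classes all sit inside the same copy of $\Z$. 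So the proposal amounts to a reduction of the conjecture to a diameter-free, non-injective gluing lemma that is of essentially the same difficulty as the conjecture itself --- consistent with the authors' remark that even the one-dimensional case appears highly non-trivial.
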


    On the other hand, it is easy to see that there cannot be a bound just
    in terms of the dimension
    of the tile. Indeed, given any $d$ it is possible to find a one-dimensional
    tile that does not tile $\Z^d$. Such a tile $T$ can be constructed by
    fixing an integer $k$ and taking two intervals of length
    $k$, distance $k^2 - 1$ apart, where in between the intervals only every
    $k$-th point is present in the tile. For example, if $k = 4$ then
    the resulting tile would be
    $$
        \mathtt{XXXX...X...X...X...XXXX}
    $$

    Suppose that $T$ tiles $\Z^d$. Choose a large integer $N$ and 
    consider the cuboid $[N]^d$. Fix one of the $d$ directions and only
    consider the copies of $T$ in this direction that intersect $[N]^d$.
    There can be at most $N^{d-1} (N/(k^2 + 2k - 1) + 2) = O(N^d / k^2)$
    such tiles and they cover at most $O(N^d / k)$ elements of $[N]^d$.
    Since there are $d$ directions, at most $O(dN^d / k)$ elements of
    the cuboid
    can be covered with tiles, but this number is less than $N^d$ for
    large $k$. Therefore, if $k$ is large enough, then $T$ does not tile
    $\Z^d$.

    Finally, we do not know how to find reasonable lower bounds, even for
    seemingly simple tiles. In particular, we do not know the smallest
    dimension that can be tiled by a given interval with the central
    point removed.  The largest tile of this shape for which we know
    the answer is $\mathtt{XXX.XXX}$: it does not tile $\Z^2$ (by case
    analysis), but it tiles $\Z^3$. (One way to achieve this is to adapt
    the argument of Section~\ref{sec:simplecase}, to make it work in
    the case of this tile for $d=3$.)

    \begin{qn}
        Let $T$ be the tile
        $
        \underbrace{\mathtt{XXXXX}}_k
        .
        \underbrace{\mathtt{XXXXX}}_k
        $ .
        What is the least $d$ for which $T$ tiles $\Z^d$?
    \end{qn}
    
    Just getting rough bounds on $d$ would be very interesting. We do
    not even know if $d \to \infty$ as $k \to \infty$.

    \renewcommand{\biblistfont}{\normalfont\normalsize}

    \bibliography{tilings}

\end{document}